\documentclass[12pt]{amsart}

\topmargin = -.2 in
\textwidth = 6.5 in
\textheight = 9 in
\oddsidemargin = 0.0 in
\evensidemargin = 0.0 in

\usepackage{amsmath}
\usepackage{amsthm}
\usepackage{graphicx}
\usepackage{amssymb}
\usepackage{enumerate}
\usepackage{tikz}

\usepackage{hyperref}
\usepackage{cleveref}

\usetikzlibrary{arrows}
\usetikzlibrary{arrows.meta}
\usetikzlibrary{decorations.markings}
\tikzset{>={Latex[width=1.2mm,length=1.7mm]}}
\usepackage{tabularx}
\usepackage{mathrsfs}

\newtheorem{thm}{Theorem}[section]
\newtheorem{prop}[thm]{Proposition}
\newtheorem{cor}[thm]{Corollary}
\newtheorem{lem}[thm]{Lemma}
\newtheorem{obs}[thm]{Observation}

\newtheorem{prob}[thm]{Problem}
\theoremstyle{definition}
%[section]

\numberwithin{equation}{section}

\newcommand{\sn}{\mathfrak{S}_n}

\newcommand{\mfs}[1]{\mathfrak{S}_{#1}}

\newcommand{\bn}{\mathfrak{B}_n}

\newcommand{\mfb}[1]{\mathfrak{B}_{#1}}

\newcommand{\zqq}{\mathbb{Z}[\qp12, \qm12]}

\newcommand{\qp}[2]{q^{\frac{#1}{#2}}}
\newcommand{\qm}[2]{q^{\negthinspace\Bar\,\frac{#1}{#2}}}

\newcommand{\ol}[1]{\overline{#1}}

\newcommand{\hanq}{H_n^{\msfA}(q)}
\newcommand{\hbnq}{H_n^{\msfBC}(q)}
\newcommand{\hbjq}{H_J^{\msfBC}(q)}

\newcommand{\icoh}{\mathrm{IH}}

\newcommand{\Wejm}{W^J_-}

\newcommand{\wtc}[2]{\widetilde{C}_{#1}(#2)}

\newcommand{\sumsb}[1]{\sum_{\substack{#1}}}  %%%for multi-line sums
  %%%for multi-line products
\newcommand{\inv}{\textsc{inv}}

\newcommand{\defeq}{:=} 
\newcommand{\dfct}{\mathrm{dfct}}
\newcommand{\dfctbc}{\mathrm{dfct}^{\msfBC}}

\newcommand{\trunc}{\mathrm{trunc}}

\newcommand{\spn}{\mathrm{span}}

\newcommand{\src}{\mathrm{source}}
\newcommand{\snk}{\mathrm{sink}}
\newcommand{\type}{\mathrm{type}}

\newcommand{\pavoiding}{$3412$-avoiding, $4231$-avoiding }
\newcommand{\avoidsp}{avoids the patterns $3412$ and $4231${}}
\newcommand{\avoidp}{avoid the patterns $3412$ and $4231${}}
\newcommand{\avoidingp}{avoiding the patterns $3412$ and $4231${}}

\newcommand{\ntnsp}{\negthinspace}
\newcommand{\ntksp}{\negthickspace}
\newcommand{\nTksp}{\negthickspace\negthickspace}

\newcommand{\bp}{\begin{prob}}
\newcommand{\ep}{\end{prob}}

\newcommand{\PiBC}{\Pi^{\msfBC}}

\newcommand{\ssm}{\smallsetminus}

\newcommand{\net}[2]{\mathcal F^{\mathsf{#1}}(#2)}
\newcommand{\bnet}[2]{\mathcal F^{\mathsf{#1}}_\bullet(#2)}
\newcommand{\bcnet}[2]{\mathcal F^{\mathsf{#1}}_{\bullet,\circ}(#2)}

\newcommand{\msfA}{\mathsf{A}}

\newcommand{\msfBB}{\mathsf{B}}

\newcommand{\msfC}{\mathsf{C}}

\newcommand{\msfBC}{\mathsf{BC}}

\newcommand{\msfD}{\mathsf{D}}

\newcommand{\schub}[1]{\Omega_{#1}}

%\def\hhhpsp{\def\baselinestretch{0.13}\large\normalsize}

    % for pretty dsp
    % for "real" dsp
    % for bigger dsp

\begin{document}

\author[Hobbs, Parisi, Skandera, Wang]{Gavin Hobbs$^\dag$, Tommy Parisi$^\dag$,  Mark Skandera$^\dag$, \and Jiayuan Wang$^\dag$
}

\title[Generalization of Deodhar's defect statistic]{A generalization of Deodhar's defect statistic for Iwahori--Hecke algebras of type $\msfBC$}

\bibliographystyle{dart}

\date{\today}

\begin{abstract}
Let $H$ be the Iwahori--Hecke algebra corresponding to any Coxeter group.
  Deodhar's {\em defect} statistic [{\em Geom.\;Dedicata} {\bf 36}, (1990) pp.~95--119] allows one to expand products of 
  %modified signless 
  simple Kazhdan--Lusztig basis elements of $H$ in the natural basis of $H$.
  Clearwater and the third author gave a type-$\msfA$ extension
  [{\em Ann.\;Comb.}\;{\bf 25}, no.\,3 (2021) pp.~757--787]
  of this formula 
  %for the special case of $H$ having 
  %in type $\msfA$ to 
  which combinatorially 
  describes the natural expansion of products of 
  %more general 
  Kazhdan--Lusztig basis elements 
  indexed by ``smooth"
  %maximal parabolic 
  elements of the symmetric group.
  %in the natural basis of $H$.
  We similarly give a type-$\msfBC$ extension of Deodhar's result which combinatorially describes the natural expansion of Kazhdan--Lusztig basis elements indexed by hyperoctahedral group elements which are ``simultaneously smooth" in types $\msfBB$ and $\msfC$.
\end{abstract}

\maketitle

\section{Introduction}\label{sec: introduction}
Let $(W,S)$ be a Coxeter system
%group 
with generating set $S = \{ s_1, \dotsc, s_m \}$, let $\leq$ denote the Bruhat order on $W$, let $H = H(W)$ be the Iwahori--Hecke algebra corresponding to $W$, and let $\{ T_w \,|\, w \in W \}$ be the {\em natural basis} of $H$ as a $\zqq$-module. (See, e.g., \cite{BBCoxeter}.)
A second basis $\{ C'_w(q) \,|\, w \in W \}$ introduced by Kazhdan and Lusztig~\cite{KLRepCH}, sometimes rescaled as $\{ \wtc wq \,|\, w \in W \}$ with $\wtc wq \defeq q^{\ell(w)/2} C'_w(q)$, is important in many areas of mathematics. Products of these elements expand nonnegatively in the natural and Kazhdan--Lusztig bases, and have appeared in intersection homology~\cite{BBDFaisceaux}, \cite{SpringerQACI}, algorithmic and combinatorial description of Kazhdan--Lusztig basis elements themselves~\cite{BWHex}, \cite{Deodhar90}, Schubert varieties~\cite{BWHex}, total nonnegativity~\cite{GJImm}, \cite{SkanNNDCB}, \cite{StemImm}, \cite{StemConj}, 
trace evaluations~\cite{CHSSkanEKL}, \cite{CSkanTNNChar}, \cite{GreeneImm}, \cite{KLSBasesQMBIndSgn}, \cite{SkanHyperGC}, and chromatic symmetric functions~\cite{CHSSkanEKL}, \cite{SkanHyperGC}.

We will focus on Deodhar's result \cite[Prop.\,3.5]{Deodhar90}
concerning sequences $(s_{i_1},\dotsc,s_{i_k})$ of generators in $S$, products of the 
corresponding Kazhdan--Lusztig basis elements $\wtc{s_{i_j}\ntksp}q = T_e + T_{s_{i_j}}$ of and their natural expansions \begin{equation}\label{eq:Deoprod}
\wtc{s_{i_1}\ntksp}q \cdots \wtc{s_{i_k}\ntksp}q= \sum_{w \in W} a_w T_w.
\end{equation}

Deodhar described the coefficients $\{ a_w \,|\, w \in W \} \subset \mathbb Z[q]$ 
in terms of {\em subexpressions} of $(s_{i_1}, \dotsc, s_{i_k})$, sequences $\sigma = (\sigma_1, \dotsc, \sigma_k)$ with $\sigma_j \in \{ e, s_{i_j}\}$ for $j = 1,\dotsc, k$. Call index $j$ a {\em defect} of $\sigma$ if 
$\sigma_1 \cdots \sigma_{j-1}s_{i_j} < \sigma_1 \cdots \sigma_{j-1}$ and let $\dfct(\sigma)$ denote the number of defects of $\sigma$. Each coefficient on the right-hand side of (\ref{eq:Deoprod}) is given by
\begin{equation}\label{eq:Deodefectformula}
    a_w = \sum_\sigma q^{\dfct(\sigma)},
\end{equation}
where the sum is over all subexpressions
$\sigma$ of $(s_{i_1},\dotsc, s_{i_k})$
satisfying $\sigma_1 \cdots \sigma_k = w$. (Our terminology differs a bit from that of Deodhar.)

Billey and Warrington~\cite[Rmk.\,6]{BWHex}
observed that when $W$ and $H$ are
the symmetric group $\sn$ and type-$\msfA$ Iwahori--Hecke algebra
$\hanq$, the defect statistic has the following simple graphical interpretation.
Let $F = F_{s_{i_1}} \circ \cdots \circ F_{s_{i_k}}$ be the wiring diagram corresponding to $(s_{i_1}, \dotsc, s_{i_k})$, where $\circ$ denotes concatenation, and the factor wiring diagrams 
$F_{s_1}, \dotsc, F_{s_{n-1}}$
are the planar networks

\begin{equation}\label{eq:A generator wirings}
%F_{s_1}=
\begin{tikzpicture}[scale=.4,baseline=5]%s1
  \node at (-1.4,3.5) {$\scriptstyle n$};
  \node at (1.4,3.5) {$\scriptstyle n$};
  \node at (-1.5,2.5) {$\scriptstyle n-1$};
  \node at (1.5,2.5) {$\scriptstyle n-1$};
  \node at (0,1.75) {$\vdots$};
  %\node at (0,1.1) {$\scriptstyle .$};
 % \node at (0,0.85) {$\scriptstyle .$};
  \node at (-1.4,0.5) {$\scriptstyle 3$};
  \node at (1.4,0.5) {$\scriptstyle 3$};
  \node at (1.4,-0.5) {$\scriptstyle 2$};
  \node at (-1.4,-0.5) {$\scriptstyle 2$};
  \node at (-1.4,-1.5) {$\scriptstyle 1$};
  \node at (1.4,-1.5) {$\scriptstyle 1$};
   \draw[-, thick]
  (-.5,3.5) -- (.5,3.5);
  \draw[-, thick]
  (-.5,2.5) -- (.5,2.5);
   \draw[-, thick]
  (-.5,0.5) -- (.5,0.5);
  \draw[-, thick]
  % (-1,-0.5) -- (-.5,-1) -- (0,-.5) -- (.5,-1) -- (1,-0.5);
    (-.5,-0.5) -- (0,-1) -- (0.5,-.5);
  \draw[-, thick]
  (-.5,-1.5) -- (0,-1) -- (0.5,-1.5);
\end{tikzpicture}
\ntksp, \quad 
%F_{s_2}=
\begin{tikzpicture}[scale=.4,baseline=5]%s2
  \node at (-1.4,3.5) {$\scriptstyle n$};
  \node at (1.4,3.5) {$\scriptstyle n$};
  \node at (-1.5,2.5) {$\scriptstyle n-1$};
  \node at (1.5,2.5) {$\scriptstyle n-1$};
  \node at (0,1.75) {$\vdots$};
%  \node at (-1.0,1.7) {$\scriptstyle n$};
%  \node at (1.0,1.7) {$\scriptstyle n$};
%  \node at (0,1.35) {$\scriptstyle .$};
%  \node at (0,1.1) {$\scriptstyle .$};
%  \node at (0,0.85) {$\scriptstyle .$};
  \node at (-1.4,0.5) {$\scriptstyle 3$};
  \node at (1.4,0.5) {$\scriptstyle 3$};
  \node at (1.4,-0.5) {$\scriptstyle 2$};
  \node at (-1.4,-0.5) {$\scriptstyle 2$};
  \node at (-1.4,-1.5) {$\scriptstyle 1$};
  \node at (1.4,-1.5) {$\scriptstyle 1$};
     \draw[-, thick]
  (-.5,3.5) -- (.5,3.5);
  \draw[-, thick]
  (-.5,2.5) -- (.5,2.5);
%   \draw[-, thick]
%  (-.5,1.7) -- (.5,1.7);
   \draw[-, thick]
  (-.5,0.5) -- (0,0) -- (.5,0.5);
  \draw[-, thick]
    (-.5,-0.5) -- (0,0) -- (0.5,-.5);
  \draw[-, thick]
  (-.5,-1.5) -- (0.5,-1.5);
\end{tikzpicture}\ntksp, 
%\\
 \dotsc, 
 %F_{s_{n-1}}=
\begin{tikzpicture}[scale=.4,baseline=5]%sn-1
  \node at (-1.4,3.5) {$\scriptstyle n$};
  \node at (1.4,3.5) {$\scriptstyle n$};
  \node at (-1.5,2.5) {$\scriptstyle n-1$};
  \node at (1.5,2.5) {$\scriptstyle n-1$};
    \node at (0,1.75) {$\vdots$};
%  \node at (0,0.35) {$\scriptstyle .$};
%  \node at (0,0.1) {$\scriptstyle .$};
%  \node at (0,-0.15) {$\scriptstyle .$};
    \node at (-1.4,0.5) {$\scriptstyle 3$};
  \node at (1.4,0.5) {$\scriptstyle 3$};
  \node at (1.4,-0.5) {$\scriptstyle 2$};
  \node at (-1.4,-0.5) {$\scriptstyle 2$};
  \node at (-1.4,-1.5) {$\scriptstyle 1$};
  \node at (1.4,-1.5) {$\scriptstyle 1$};
   \draw[-, thick]
  (-.5,3.5) -- (0,3) -- (.5,3.5);
   \draw[-, thick]
  (-.5,2.5) -- (0,3) -- (.5,2.5);
   \draw[-, thick]
  (-.5,0.5) -- (.5,0.5);
  \draw[-, thick]
    (-.5,-0.5) -- (0.5,-.5);
  \draw[-, thick]
  (-.5,-1.5) -- (0.5,-1.5);
\end{tikzpicture}\ntksp,
\end{equation}
respectively.  Edges of $F$ are understood to be oriented from left to right, with $n$ {\em source}
vertices on the left,
%called {\em sources}, and 
$n$ {\em sink} vertices on the right,
%called {\em sinks}. 
and $k$ more {\em interior} vertices, one per factor. We can cover $F$ with $2^k$ different path families of the form $\pi = (\pi_1,\dotsc,\pi_n)$, if we allow two paths meeting at the interior vertex of a factor $F_{s_{i_j}}$ either to cross or not to cross there. Let $\Pi_w(F)$ be the subset of these path families 
having {\em type $w$}, i.e., for $i = 1,\dotsc, n$, path $\pi_i$ begins at source $i$ and terminates at sink $w_i$. Call index $j$ a {\em (type-$\msfA$) defect} of $\pi$ if the two paths meeting in $F_{s_{i_j}}$ cross an odd number of times in $F_{s_{i_1}} \circ \cdots \circ F_{s_{i_{j-1}}}$, and let $\dfct(\pi)$ be the number
of defects in $\pi$. Then each coefficient on the right-hand side of (\ref{eq:Deoprod}) is given by
\begin{equation}\label{eq:BWdefectformula}
%      a_w = \sum_\pi q^{\dfct(\pi)},
    a_w = \sum_{\pi\in \Pi_w(F)} 
q^{\dfct(\pi)}.
\end{equation}

Clearwater and the third author~~\cite[Cor.\,5.3]{CSkanTNNChar} extended this type-$\msfA$ result to products of the form
\begin{equation}\label{eq:CSprod}
\wtc{v^{(1)}}q \cdots \wtc{v^{(k)}}q
%\bigg( \sum_{u \leq v^{(1)}} T_u \bigg) 
%\cdots
%\bigg( \sum_{u \leq v^{(k)}} T_u \bigg)
= \sum_w a_w T_w 
\end{equation}
in $\hanq$,
where $v^{(1)}, \dotsc, v^{(k)}$
are maximal elements of parabolic subgroups of $\sn$, and 
each factor
\begin{equation*}
    \wtc{v^{(j)}}q \defeq \sum_{u \leq v^{(j)}} \ntksp T_u
\end{equation*}
belongs to the modified signless Kazhdan--Lusztig basis of $\hanq$.
Again we have (\ref{eq:BWdefectformula}), where 
$F$ has the form $F_{v^{(1)}} \circ \cdots \circ F_{v^{(k)}}$, 
with factors belonging to a class of planar networks generalizing those in
(\ref{eq:A generator wirings}) in that their interior vertices may have indegree and outdegree greater than $2$. More generally still, the formula (\ref{eq:BWdefectformula}) holds if permutations $v^{(1)}, \dotsc, v^{(k)}$ correspond to smooth type-$\msfA$ Schubert varieties, i.e., if they \avoidp.

We extend this result further to analogous products of elements
of the Kazhdan--Lusztig basis of the type-$\msfBC$ Iwahori--Hecke algebra 
$\hbnq$
in Section~\ref{s:main}.
To do so, we review necessary background material on the hyperoctahedral group in Section~\ref{sec: bn}, 
$\hbnq$ in Section~\ref{sec: Hecke algebra},
and type-$\msfBC$ planar networks in Section~\ref{sec: BCstarnets}. We discuss 
open problems in Section~\ref{s:open}.

\section{The hyperoctahedral group}
\label{sec: bn}

Recall that for a Coxeter group $W$ with generating set $S$
and an element $w \in W$, an expression $w = s_{i_1} \cdots s_{i_\ell}$ for $w$ as a product of generators is called {\em reduced}
if it is as short as possible, and $\ell = \ell(w)$ is called the {\em length} of $w$.  We define the {\em Bruhat order} on $W$ by
declaring $v \leq w$ if some (equivalently, every) reduced expression
for $w$ contains a subsequence which is a reduced expression for $v$.  By this definition we clearly have

\begin{equation}\label{eq:bruhatinverse}
    v \leq w \ \Longleftrightarrow\ v^{-1} \leq w^{-1}.
\end{equation}
For a subset $J \subseteq S$, let $\smash{W_J}$ denote the subgroup of $W$ generated by $J$, and call such a subgroup {\em parabolic}.
Each coset $w\smash{W_J}$ of $W_J$ has a unique Bruhat-minimal element and, if $W$ is finite, a unique Bruhat-maximal element~\cite[Cor.~2.4.5]{BBCoxeter}.
Let $\smash{\Wejm}$ denote the set of minimal coset representatives.
It is known \cite[Defn.~2.4.2, Lem.~2.4.3]{BBCoxeter} that we have

\begin{equation}\label{eq:wejmdefns}
%\begin{aligned}
    \Wejm = \{ w \in W \,|\, ws > w \text{ for all } s \in J \} 
    = \{ w \in W \,|\, wv \geq v \text{ for all } v \in W_J \} 
%    &= \{ w \in W \,|\, \text{$i$ appears before $i+1$ in $w$
%    \end{aligned}
\end{equation}
and that there is a bijection~\cite[Prop.~2.4.4]{BBCoxeter}
\begin{equation}\label{eq:Jfactor}
%\begin{aligned}
    \Wejm \times W_J \;
    \overset{1-1}{\longleftrightarrow} \;
    W
    %\bn
    \end{equation}
    given by simple multiplication $(w,u) \mapsto wu$ and satisfying $\ell(w) + \ell(u) = \ell(wu)$.
The Bruhat order on $W$ induces a related partial order on the set $W / W_J$ of cosets $\{ wW_J \,|\, w \in \bn \}$. Specifically, we declare $vW_J \leq wW_J$ if elements of the cosets satisfy any of the three (equivalent) inequalities in the Bruhat order on $W$~\cite[Lem.\,2.2]{DougInv}.
  \begin{enumerate}[(i)]
      \item The minimal element of $vW_J$ is less than
      or equal to
the minimal element of $wW_J$.
      \item The maximal element of $vW_J$ is less than
      or equal to
the maximal element of $wW_J$, if these exist.
      \item At least one element of $vW_J$ is less than
      or equal to
at least one element of $wW_J$.
  \end{enumerate}

  We call this poset the {\em Bruhat order on $W / W_J$}.

The {\em hyperoctahedral group}
$\,\bn$ is the Coxeter group of type $\mathsf B_n = \mathsf C_n$, with generators $S=\{s_0,\dotsc,s_{n-1}\}$, and relations

\begin{equation}\label{eq:bnpresentation}
  \begin{alignedat}2
    {s_i}^2 &= e &\quad &\text{for $i = 0, \dotsc, n-1$,}\\
    s_0s_1s_0s_1 &= s_1s_0s_1s_0, &\quad & \\
%    ts_i &= s_it &\quad &\text{for $i=2,\dotsc,n-1$,}\\
    s_is_j &= s_js_i &\quad &\text{for $i,j \geq 0$ and $|i-j| \geq 2$,}\\
    s_is_js_i &= s_js_is_j &\quad &\text{for $i,j \geq 1$ and $|i-j| = 1$.}
  \end{alignedat}
\end{equation}

The parabolic subgroup of $\bn$ generated by $\{s_1,\dotsc,s_{n-1}\}$ is the {\em symmetric group} $\sn$, the Coxeter group of type $\msfA_{n-1}$.

Like elements of the symmetric group, elements of the hyperoctahedral group naturally correspond to permutations of
letters belonging to a certain alphabet.

Specifically, we define the alphabet
\begin{equation*}
  [\ol n, n] \defeq \{ -n, \dotsc, n \} \ssm \{ 0 \}
  \end{equation*}
with notation $\ol a \defeq -a$ for all $a \in [\ol n,n]$,
and we consider permutations $w_{\ol n} \cdots w_{\ol 1} w_1 \cdots w_n$ of $[\ol n, n]$ which satisfy $w_{\ol i} = \ol{w_i}$ for all $i \in [\ol n, n]$. We define a correspondence between $\bn$ and such permutations via the (left) action of $\bn$ on these permutations defined by
\begin{enumerate}
    \item $s_0$ swaps the letters in positions $\ol1$, $1$,
    \item $s_i$ ($i = 1,\dotsc,n-1$) simultaneously swaps 
    the letters in positions $i$, $i+1$ with each other,
    and
    the letters in positions $\ol i$, $\ol{i+1}$ with each other.
\end{enumerate}
Then for $w = s_{i_1} \cdots s_{i_r} \in \bn$,
we define the {\em (long) one-line notation} of $w$ to be the permutation

\begin{equation}\label{eq:s2nrearrange}
  w_{\ol n} \cdots w_{\ol 1} w_1 \cdots w_n 
= s_{i_1}(s_{i_2}( \cdots (s_{i_r}( \ol n \cdots \ol 1 1 \cdots n)) \cdots )).
\end{equation}

For example, when $n=4$, the element $s_0s_1 \in \mfb4$ has long one-line notation
\begin{equation}\label{eq:long1lineex}
%  \begin{aligned}
    s_0( s_1(\ol4 \ol3 \ol2 \ol1 1234))
    = s_0(\ol4 \ol3 \ol1 \ol2 2 1 3 4)
    = \ol4 \ol3 \ol1 2 \ol2 1 3 4.
%    = \ol4\ol3 2 \ol2 \ol1 1 3 4.
%  \end{aligned}
  \end{equation}

(By our definition, the right action of $s_0$ swaps the letters $1, \ol 1$,
wherever they are, while the right action of $s_i$ for $i = 1,\dotsc,n-1$ swaps
the letters $i$, $i+1$, wherever they are, while simultaneously swapping the letters $\ol i$, $\ol{i+1}$,
wherever they are.) It follows that $w_i^{-1}$ is the index $j$ satisfying $w_j = i$.

The condition $w_{\ol i} = \ol{w_i}$
implies that each element (\ref{eq:s2nrearrange}) is completely determined by the subword $w_1 \cdots w_n$, called the \textit{short one-line notation} of $w$.

The set of short one-line notations of elements of $\bn$ is the set of {\em signed permutations} of $[1,n]$:
words $w_1 \cdots w_n$ with letters in the alphabet $[\ol n,n]$ 
having no repeated absolute values.

The correspondence between $\bn$ and signed permutations is given by the left action of $\bn$ on signed permutations defined by
\begin{enumerate}
    \item $s_0$ negates the letter in position $1$, and
    \item $s_i$ ($i = 1,\dotsc,n-1$) swaps the letters in positions $i$, $i+1$ with each other.
\end{enumerate}
For example the short one-line notation $\ol2134$ of the element $s_0s_1 \in \mfb{4}$
(\ref{eq:long1lineex}) can be obtained by its action on $1234$: 

\begin{equation}\label{eq:short1lineex}
   s_0( s_1(1234))
    = s_0(2 1 3 4)
    = 
\ol2 1 3 4.
\end{equation}
(By our definition, the right action of $s_0$ on signed permutations negates the letter $1$ or $\ol1$, wherever it is, while the right action of $s_i$ for $i = 1,\dotsc,n-1$ swaps
the letters having absolute values $i$, $i+1$, wherever they are, while leaving any negation signs in their original positions.)  

Some properties of elements of $\bn$ can be stated in terms of {\em pattern avoidance}.  Given a word $u = u_1 \cdots u_k$ with distinct letters, we say that a subword $w_{i_1} \cdots w_{i_k}$ of $w_{\ol n} \cdots w_{\ol1} w_1 \cdots w_n$ {\em matches the pattern $u$} if we have
\begin{equation*}
    w_{i_h} < w_{i_j}
    \text{ if and only if }
    u_h < u_j
\end{equation*}
    for all $1 \leq h < j \leq k$.
    We say that $w$ {\em avoids the pattern $u$} if no subword of $w$ matches the pattern $u$.

We define an \emph{inversion} of any word $w_1 \cdots w_n$ to be a pair $(i,j)$ of indices satisfying $i < j$ and $w_i > w_j$. Let $\inv(w)$ denote the number of inversions in $w$. For $w \in \sn$, it is well known that we have $\inv(w_1 \cdots w_n) = \ell(w)$. For $w \in \bn$ we can similarly compute $\ell(w)$ by inspecting the short one-line notation of $w$.  (See, e.g., \cite[Prop.\,8.1.1,  Eqn.\,(8.2)]{BBCoxeter}.)
\begin{lem}\label{l:BClength}
  Let $w \in \bn$ have short one-line notation $w_1 \cdots w_n$.
  Then its length satisfies
  \begin{equation*}
    \ell(w) = \inv(w_1 \cdots w_n) + \sumsb{i\\w_i < 0} |w_i|.
  \end{equation*}
\end{lem}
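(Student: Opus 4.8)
The plan is to set $f(w) \defeq \inv(w_1\cdots w_n) + \sumsb{i\\ w_i<0}|w_i|$, the claimed right-hand side, and to prove $f = \ell$ directly, without importing any external left-descent criterion. The engine of the argument is a single local computation: for every $w \in \bn$ and every Coxeter generator $s$, I would show that $f(sw) - f(w) \in \{+1,-1\}$, together with an explicit description of the sign. Recall from the left action on short one-line notation described above that, for $1 \le i \le n-1$, the element $s_i w$ has short one-line notation obtained from that of $w$ by interchanging the entries in positions $i$ and $i+1$, while $s_0 w$ is obtained by negating the entry in position $1$.

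For the generators $s_1, \dotsc, s_{n-1}$ this is immediate: interchanging two adjacent entries leaves the multiset of values (hence the term $\sum_{w_i<0}|w_i|$) untouched and changes $\inv$ by exactly $+1$ if $w_i < w_{i+1}$ and by $-1$ if $w_i > w_{i+1}$, since only the pair $(i,i+1)$ can be affected. The case of $s_0$ is the crux and is where I expect the only real work. Negating $w_1$ changes the value term by $+|w_1|$ if $w_1 > 0$ and by $-|w_1|$ if $w_1 < 0$; simultaneously, because only position $1$ changes value, it changes $\inv$ by $-N$ if $w_1 > 0$ and by $+N$ if $w_1 < 0$, where $N$ is the number of indices $j \ge 2$ with $|w_j| < |w_1|$. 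The decisive observation is that the absolute values $|w_1|, \dotsc, |w_n|$ are exactly $1, \dotsc, n$, so there are precisely $|w_1| - 1$ indices $j$ with $|w_j| < |w_1|$, all of them lying in $\{2,\dotsc,n\}$; hence $N = |w_1| - 1$. Substituting, the two contributions almost cancel and leave $f(s_0 w) - f(w) = +1$ when $w_1 > 0$ and $-1$ when $w_1 < 0$.

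With the local computation in hand I would finish as follows. First, $f(e) = 0$, and since $f$ changes by exactly $\pm 1$ under left multiplication by a generator, evaluating $f$ along a reduced word for $w$ gives $f(w) \le \ell(w)$. Second, for the reverse inequality I would argue by strong induction on $f(w)$: if $w \ne e$, then either some adjacent pair satisfies $w_i > w_{i+1}$ (take $s = s_i$), or else $w_1 < \dotsb < w_n$, which together with $w \ne e$ forces $w_1 < 0$ (take $s = s_0$). In either case the sign analysis gives $f(sw) = f(w) - 1$, so the inductive hypothesis yields $\ell(sw) = f(sw) = f(w) - 1$, whence $\ell(w) \le \ell(sw) + 1 = f(w)$. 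The base case $f(w) = 0$ forces $\inv(w) = 0$ and $w_i > 0$ for all $i$, i.e.\ $w = e$. Combining the two inequalities gives $f = \ell$.

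The single point requiring care is the $s_0$ computation, where both summands of $f$ move simultaneously; everything hinges on the elementary count that exactly $|w_1|-1$ of the entries have absolute value smaller than $|w_1|$. Once that cancellation is verified, the remaining steps are formal. In particular, the argument never needs to identify left descents in advance: the sign of $f(sw) - f(w)$ itself supplies exactly the descent information required, which is why the proof can be made self-contained rather than appealing to a known length characterization.
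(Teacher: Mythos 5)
Your proof is correct, but it takes a genuinely different route from the paper for a simple reason: the paper does not prove this lemma at all. It is quoted as known background, with a pointer to \cite[Prop.~8.1.1, Eqn.~(8.2)]{BBCoxeter}, and the first statement the paper actually proves (Lemma~\ref{l:longinversions}) is derived \emph{from} it. What you have written is in effect a self-contained re-derivation of that textbook fact, and it holds up: the $s_i$ ($i\geq 1$) case is the familiar type-$\msfA$ computation; the crux, as you say, is the $s_0$ case, and your cancellation is right because the absolute values $|w_1|,\dotsc,|w_n|$ form a permutation of $1,\dotsc,n$, so exactly $|w_1|-1$ of the entries $w_2,\dotsc,w_n$ have absolute value below $|w_1|$, giving $f(s_0w)-f(w) = |w_1|-(|w_1|-1) = +1$ when $w_1>0$ and $-1$ when $w_1<0$. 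The concluding sandwich is also sound: $f\leq\ell$ by stepping along a reduced word from the identity, and $\ell\leq f$ by strong induction on $f$, using that any $w\neq e$ admits either an adjacent descent $w_i>w_{i+1}$ or, failing that, an increasing word with $w_1<0$, so some generator strictly decreases $f$. (One cosmetic point: the inductive hypothesis only needs to supply $\ell(sw)\leq f(sw)$; equality there is a consequence of the already-established inequality $f\leq\ell$, not an input.) The trade-off is the expected one: the paper's citation keeps its background section short and leans on the standard reference, while your argument makes the lemma self-contained at the cost of about a page, essentially reproducing the descent-statistic technique by which such length formulas are established in the literature.
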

\noindent We can also compute $\ell(w)$ by counting certain inversions in the long one-line notation of $w$.
\begin{lem}\label{l:longinversions}
The length of $w \in \bn$ equals the number of pairs $(i,j)$ with $|i| \leq j$ and $j$ appearing earlier than $i$ in
    $w_{\ol n} \cdots w_{\ol1} w_1 \cdots w_n$.
    \end{lem}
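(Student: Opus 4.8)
The plan is to derive the statement directly from Lemma~\ref{l:BClength}, i.e.\ to show that the long-inversion count equals $\inv(w_1 \cdots w_n) + \sumsb{i\\w_i<0}|w_i|$. Reading the long one-line notation as a sequence whose positions are $\ol n, \dotsc, \ol 1, 1, \dotsc, n$ from left to right, I first note that any counted pair $(i,j)$ with $|i| \le j$ and $j$ earlier than $i$ forces the earlier letter $j$ to be positive, since $|i| \ge 1$ gives $j \ge 1$. Thus each counted pair consists of an earlier position carrying a positive value $a$ and a later position carrying a value $b$ with $a \ge |b|$. I will split the total count $N$ according to whether the two positions lie in the left half $L = \{\ol n, \dotsc, \ol 1\}$ or the right half $R = \{1, \dotsc, n\}$, using the antisymmetry $w_{\ol i} = \ol{w_i}$ to rewrite every condition in terms of the short one-line notation $w_1 \cdots w_n$. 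Since every left position precedes every right position, the (earlier-right, later-left) case is empty, leaving three contributions $N_{LL}$, $N_{LR}$, $N_{RR}$.

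The first key computation handles the mixed term $N_{LR}$. A left position $\ol s$ carries the value $-w_s$, which is positive exactly when $w_s < 0$, and the inequality $-w_s \ge |w_t|$ becomes $|w_s| \ge |w_t|$ for the value $w_t$ at an arbitrary right position $t$. Because the multiset $\{|w_1|, \dotsc, |w_n|\}$ equals $\{1, \dotsc, n\}$, for each $s$ with $w_s < 0$ there are exactly $|w_s|$ right positions $t$ with $|w_t| \le |w_s|$; summing gives $N_{LR} = \sumsb{i\\w_i<0}|w_i|$, which already matches the second term of Lemma~\ref{l:BClength}.

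It then remains to show $N_{LL} + N_{RR} = \inv(w_1 \cdots w_n)$. I will expand $\inv(w_1 \cdots w_n)$ by the signs of the two entries $(w_s, w_t)$ with $s < t$ into the cases $(+,+)$, $(+,-)$, $(-,+)$, $(-,-)$, and rewrite $N_{RR}$ and $N_{LL}$ similarly in terms of $w_1 \cdots w_n$. One checks that the purely positive inversions are recovered by the $w_t > 0$ part of $N_{RR}$, and the purely negative inversions by the $w_s < 0$ part of $N_{LL}$, while the $(-,+)$ case contributes nothing on either side.

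The main obstacle is the mixed-sign $(+,-)$ inversions, every one of which is genuinely an inversion of the short word. Here $N_{RR}$ contributes only those pairs $(p,q)$ with $p < q$, $w_p > 0$, $w_q < 0$ satisfying $w_p \ge |w_q|$, whereas $N_{LL}$ contributes only those satisfying $|w_q| \ge w_p$. The resolution is that $w_p$ and $|w_q|$ are distinct, since the absolute values of a signed permutation are distinct, so exactly one of these two inequalities holds for each such pair; hence $N_{RR}$ and $N_{LL}$ together count every $(+,-)$ pair exactly once. Combining, $N_{LL} + N_{RR}$ recovers all of $\inv(w_1 \cdots w_n)$, and adding $N_{LR}$ yields $\ell(w)$ by Lemma~\ref{l:BClength}.
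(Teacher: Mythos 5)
Your proposal is correct, and it takes a genuinely different route from the paper. The paper proves the lemma by induction on $n$: it locates the letter $n$ at position $k$, deletes $n$ and $\ol n$ to obtain $v \in \mfb{n-1}$, uses Lemma~\ref{l:BClength} to compute the increment $\ell(w)-\ell(v)$ ($n-k$ if $k>0$, and $|k|-1+n$ if $k<0$), and observes that the long-inversion count grows by exactly the number of letters following $n$ in the long word, which matches that increment. You instead give a direct, non-inductive counting argument: splitting the long inversions into $N_{LL}+N_{LR}+N_{RR}$, identifying $N_{LR}=\sum_{w_i<0}|w_i|$ via the multiset $\{|w_1|,\dotsc,|w_n|\}=\{1,\dotsc,n\}$, and then showing $N_{LL}+N_{RR}=\inv(w_1\cdots w_n)$ by a sign case analysis whose crux — that each mixed-sign $(+,-)$ inversion is counted exactly once because exactly one of $w_p>|w_q|$ or $|w_q|>w_p$ holds for distinct absolute values — is handled correctly (including the boundary pairs $(w_s,-w_s)$ inside $N_{LR}$). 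Both arguments bottom out in Lemma~\ref{l:BClength}. The paper's induction is shorter to write down; your decomposition is more explicit about \emph{which} long inversion accounts for which term of the length formula, effectively exhibiting a bijective correspondence rather than an inductive verification, at the cost of a four-case analysis.
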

    \begin{proof}
Let $\ell'$ be the proposed formula for length, and assume that $\ell'$ agrees with the true length $\ell$ for hyperoctahedral groups $\mathfrak B_1,\dotsc, \mathfrak B_{n-1}$. Consider $w  = w_{\ol n} \cdots w_{\ol1} w_1 \cdots w_n \in \bn$ satisfying $w_k = n$, $w_{\ol k} = \ol n$ for some 
$k \in [\ol n,n]$, and define $v = v_{\ol{n-1}} \cdots v_{\ol1} v_1 \cdots v_{n-1} \in \mathfrak B_{n-1}$ 
by removing letters $n$, $\ol n$ from $w$. By Lemma~\ref{l:BClength} we have 

\begin{equation}\label{eq:lengthinduction}
%\begin{aligned}
    \ell(w) = \begin{cases}
%    \ell(v) + \# \{ i \,|\, k < i \leq n \}
%    &\text{if $k > 0$,}\\
%    \ell(v) + \# \{ i \,|\, 1 \leq i < k \} + n
%    &\text{if $k < 0$.}
%    \end{cases}\\
%    &= \begin{cases}
        \ell(v) + n-k &\text{if $k >0$},\\
       \ell(v) + |k|-1 + n &\text{if $k < 0$},
       \end{cases}
%       \end{aligned}
\end{equation}
since $n$ forms inversions with $w_{k+1}, \dotsc, w_n$ if it appears in position $k > 0$, and $\ol n$ forms inversions with $w_1 \cdots w_{|k|-1}$ if it appears in position $|k|$ with $k < 0$. On the other hand, by the definition of $\ell'$ and induction we have
\begin{equation*}
\begin{aligned}
       \ell'(w) &= 
       \ell'(v) + \# \text{ letters following $n$ in $w_{\ol n} \cdots w_{\ol1} w_1 \cdots w_n$} \\
       &= \ell(v) + \# \text{ letters following $n$ in $w_{\ol n} \cdots w_{\ol1} w_1 \cdots w_n$}.
    \end{aligned}
\end{equation*}

By (\ref{eq:lengthinduction}), this is $\ell(w)$.
 \end{proof}

Certain elements of the hyperoctahedral group are most easily defined in terms of subintervals of $[\ol n, n]$, where we declare any subset 
\begin{equation*}
    [a,b] \defeq \{ a, \dotsc, b \} \ssm \{0 \}
\end{equation*}
 of $[\ol n,n]$ 
 to be 
 an {\em interval}, even if $a < 0 < b$.
% \begin{equation*}
%   [a, b] 
%   \defeq \left\{ \begin{array}{cc}
%       \{a, \dotsc, b\}  & \text{ if } a,b<0 \text{ or } 0<a,b  \\
%        \{a, \dotsc, \ol 1, 1, \dotsc, b \} & \text{ if } a<0<b
%   \end{array}\right. (\star Review? - See\ comment)
%   % \{ a, \dotsc, b \} \ssm \{ 0 \}
%   \end{equation*}
% to be a subinterval of $[\ol n, n]$ for any nonzero $a,b$ such that $\ol n \leq a < b \leq n$. 
Roughly speaking, we define a {\em reversal} of $\bn$ to be an element $s_{[a,b]}$ obtained from the identity by reversing letters $[a,b]$ in positions $[a,b]$, and ensuring that the resulting permutation belongs to $\bn$.
To be precise, we describe such elements using three cases: $a=b$, $a = \ol b$ ($b > 0$), and $0 < a < b$. When $a=b$, we have the trivial reversal $s_{[b,b]} = s_\emptyset = e$. %which trivially is a reversal. 
For $b>0$, the reversal $s_{[\ol b,b]}$ is the element having one-line notation
\begin{equation}
%    \ol 1 \cdots \ol b \cdot (b+1) \cdots n.
 \ol n \cdots \ol{(b+1)} \cdot b \cdots 1 \cdot \ol 1 \cdots \ol b \cdot (b+1) \cdots n,
\end{equation}
and equal to the product of generators $s_0 (s_1 s_0 s_1)(s_2s_1s_0s_1s_2) \cdots (s_{b-1} \dotsc s_1 s_0 s_1 \dotsc s_{b-1})$. For $0 < a < b$ the reversal $s_{[a,b]}$ is the element having 
%short 
one-line notation
\begin{equation*}
%1 \cdots (a-1) \cdot b \cdots a \cdot (b+1) \cdots n.
\ol n \cdots \ol{(b+1)} \cdot \ol a \cdots \ol b \cdot \ol{(a-1)} \cdots \ol 1
\cdot 
1 \cdots (a-1) \cdot b \cdots a \cdot (b+1) \cdots n,
\end{equation*}
and equal to the product of generators $s_a (s_{a+1}s_a)(s_{a+2}s_{a+1}s_a) \cdots ( s_{b-1} \cdots s_a)$. Each reversal $s_{[a,b]}$ is the unique element of maximum length in a parabolic subgroup of $\bn$, generated by
\begin{equation}\label{eq:jabdef}
    J_{[a,b]} \defeq \begin{cases}
        \{ s_0, \dotsc, s_{b-1} \} &\text{if $a = \ol b$ ($b > 0$)},\\
        \{ s_a, \dotsc, s_{b-1} \} &\text{if $0 < a < b$}.
        \end{cases}
\end{equation}

The first equality in (\ref{eq:wejmdefns}) implies that when $J = J_{[a,b]}$, the subset $\Wejm$ of $\bn$ may be characterized in terms
of long one-line notation.  Specifically, we have the following.

\begin{lem}\label{l:minreps}
For $J = J_{[a,b]}$, each minimum-length coset representative $w \in \Wejm$ satisfies

\begin{enumerate}[(i)]
    \item %$w^{-1}_{\ol{i+1}} < w_{\ol i}^{-1}$ and 
    $w_{\ol b}^{-1} < \cdots < w_{\ol a}^{-1}$ and $w_a^{-1} < \cdots < w_b^{-1}$ for $a > 0$, 
\item $w_{\ol b}^{-1} < \cdots < w_{\ol1}^{-1} < w_1^{-1} < \cdots < w_b^{-1}$ for $a = \ol b$.
%$w_{\ol{1}}^{-1} < w_1^{-1}$ if $a=\ol b$.
\end{enumerate}
\end{lem}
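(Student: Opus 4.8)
The plan is to use the descent characterization $\Wejm = \{w \in \bn : ws > w \text{ for all } s \in J\}$ from the first equality in (\ref{eq:wejmdefns}) and to convert each inequality $ws_i > w$ into a statement about the long one-line notation by passing to the inverse. Since $(ws_i)^{-1} = s_i w^{-1}$ and length is invariant under inversion, (\ref{eq:bruhatinverse}) gives $ws_i > w \iff s_i w^{-1} > w^{-1}$; that is, $w \in \Wejm$ exactly when no $s_i \in J$ is a left descent of $w^{-1}$. The payoff is that left multiplication by $s_i$ acts on \emph{positions} of the short one-line notation of $w^{-1}$, whose entries $(w^{-1})_1, \dots, (w^{-1})_n$ are precisely the positions $w_1^{-1}, \dots, w_n^{-1}$ of the letters $1, \dots, n$ in $w$.

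I would first establish two elementary left-descent rules for $w^{-1}$ using Lemma~\ref{l:BClength}. For $i \geq 1$, the element $s_i w^{-1}$ swaps the entries in positions $i, i+1$ of the short one-line notation of $w^{-1}$; this leaves the signed-letter multiset, and hence the term $\sum_{w_k < 0} |w_k|$, unchanged while altering $\inv$ by exactly $\pm 1$, so that $s_i w^{-1} > w^{-1} \iff w_i^{-1} < w_{i+1}^{-1}$. For $i = 0$, the element $s_0 w^{-1}$ negates the entry in position $1$; tracking the simultaneous change in $\inv$ and in the sign term of Lemma~\ref{l:BClength} shows $s_0 w^{-1} > w^{-1} \iff w_1^{-1} > 0$, which by the antisymmetry $w_{\ol j}^{-1} = \ol{w_j^{-1}}$ of $w^{-1} \in \bn$ is the same as $w_{\ol 1}^{-1} < w_1^{-1}$.

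It then remains to assemble these rules over the generators of $J = J_{[a,b]}$ listed in (\ref{eq:jabdef}). For $0 < a < b$ we have $J = \{s_a, \dots, s_{b-1}\}$, so membership in $\Wejm$ is equivalent to the single chain $w_a^{-1} < w_{a+1}^{-1} < \cdots < w_b^{-1}$; applying $w_{\ol j}^{-1} = \ol{w_j^{-1}}$ negates and reverses this chain to produce $w_{\ol b}^{-1} < \cdots < w_{\ol a}^{-1}$ automatically, which is (i). For $a = \ol b$ we have $J = \{s_0, s_1, \dots, s_{b-1}\}$, and the rules give $w_1^{-1} < \cdots < w_b^{-1}$ together with $w_{\ol 1}^{-1} < w_1^{-1}$; the same symmetry supplies the negative half $w_{\ol b}^{-1} < \cdots < w_{\ol 1}^{-1}$, and concatenation yields the full chain $w_{\ol b}^{-1} < \cdots < w_{\ol 1}^{-1} < w_1^{-1} < \cdots < w_b^{-1}$ of (ii).

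The main obstacle is the bookkeeping in the two descent rules, and especially the $s_0$ case: negating the first entry changes both the inversion count and the sign term of Lemma~\ref{l:BClength} at once, so one must verify that the net effect is a length change of a single definite sign governed precisely by the condition $w_1^{-1} > 0$ (here the fact that $|w_1^{-1}|$ is not repeated among the other entries is exactly what forces the sign term to dominate the drop in $\inv$). Once these local rules are secured, the passage to the stated chains is purely formal, using only the antisymmetry $w_{\ol j}^{-1} = \ol{w_j^{-1}}$ to fold the negative-indexed inequalities onto the positive-indexed ones.
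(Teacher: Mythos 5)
Your proposal is correct and follows essentially the same route as the paper's proof: both characterize $\Wejm$ by the descent condition in (\ref{eq:wejmdefns}), pass to $w^{-1}$ via (\ref{eq:bruhatinverse}), translate left descents of $w^{-1}$ into the positional inequalities, and assemble these over the generators listed in (\ref{eq:jabdef}). The only difference is one of detail, not of method: you verify the two local descent rules by an explicit length computation with Lemma~\ref{l:BClength}, whereas the paper reads them off directly from the left action on long one-line notation.
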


\begin{proof}
By 
(\ref{eq:bruhatinverse}) and
the action preceding (\ref{eq:s2nrearrange}), we have the equivalences

\begin{equation*}
    ws_i > w
    \ \Longleftrightarrow\
    s_i w^{-1} > w^{-1}
    \ \Longleftrightarrow\
%$s_iw^{-1} > w^{-1}$, 
%i.e., $ws_i > w$.
\begin{cases}
    %w_{\ol{i+1}}^{-1} < w_{\ol i}^{-1} \text{ and } 
    w_i^{-1}<w_{i+1}^{-1}
    &\text{if $1 \leq i \leq n-1$},\\
    w_{\ol1}^{-1} < w_1^{-1}
    &\text{if $i = 0$}.
\end{cases}
\end{equation*}
Thus by (\ref{eq:wejmdefns}), (\ref{eq:jabdef}), the combination of conditions $(i)$, $(ii)$
is equivalent to the minimality of $w$ in 
$wW_J$.
\end{proof}

\section{The Hecke algebra of the hyperoctahedral group}\label{sec: Hecke algebra}

Given Coxeter group $W$ with generator set $S$, define
the {\em Hecke algebra} $H = H(W)$ of $W$ to be
the $\zqq$-span of $\{ T_w \,|\, w \in W \}$
with unit $T_e$ and multiplication defined by
\begin{equation}\label{eq:genlhecke}
%  \begin{cases}
  T_wT_s =
  \begin{cases}
    qT_{ws} + (q-1)T_w &\text{if $ws < w$},\\
    T_{ws} &\text{if $ws > w$},
  \end{cases}
\end{equation}
where $s \in S$ and $w \in W$. This formula guarantees that for $w \in W$ and
any reduced expression $s_{i_1} \cdots s_{i_\ell}$ for $w$, we have $T_w = \smash{T_{s_{i_1}} \cdots T_{s_{i_\ell}}}$.
Consequently, for $v, w \in W$ we have the implication

\begin{equation}\label{eq:tvtwtvw}
    \ell(v) + \ell(w) = \ell(vw) \quad \Rightarrow \quad T_vT_w = T_{vw}.
\end{equation}
Call $\{ T_w \,|\, w \in W \}$ the {\em natural basis} of $H$.

A second basis~\cite{KLRepCH} 
of $H$ is the (rescaled)
%, introduced in 
{\em Kazhdan--Lusztig basis} $\{ \wtc wq \,|\, w \in W \}$,
related to the natural basis by
\begin{equation}\label{eq:wtcdef}
  \wtc wq = \sum_{v \leq w} P_{v,w}(q) T_v,
\end{equation}
where
%$\leq_W$ is the Bruhat order on $W$ and 
$\{ P_{v,w}(q) \,|\, v, w \in W \} \subseteq \mathbb Z[q]$ are the
%recursively defined 
%(type-$\msfA$)
{\em Kazhdan--Lusztig polynomials},
whose recursive definition appears in ~\cite{KLRepCH}. Coefficients of these polynomials may be interpreted in terms of
intersection cohomology $\icoh^*(\schub w)$~\cite{KLSchub}, where $\schub w$ is a certain {\em Schubert variety} indexed by $w$. (See, e.g., \cite{BilleyLak}.) In particular, when $\schub w$ is rationally smooth, all polynomials $\{P_{v,w}(q) \,|\, v \leq w\}$ are identically $1$ \cite[Thm.\,A.2]{KLRepCH}.

For each subset $J$ of generators of $W$, one forms a natural $\zqq$-submodule 
of $H$ by taking the span of sums

\begin{equation}\label{eq:cosetsum}
T_{uW_J} \defeq \ntksp \sum_{v \in uW_J} \ntksp T_v
\end{equation}
of natural basis elements of $H$ with each sum corresponding to a coset of $W_J$. This submodule
\begin{equation}\label{eq:hbjqdef}
H_J
%\hbjq 
\defeq \spn_{\zqq} \{ T_{uW_J} \,|\, u \in \Wejm \}
\end{equation}
in fact forms a left $H$-module. A nice formula for the action of $H$ on $H_J$ was given by Douglass~\cite[Prop.~2.3]{DougInv}. %the left multiplication rule for \eqref{equ: parabolic}.
\begin{prop}\label{p:douglass}
    Let $s$ be a generator of $W$, and let $J$ 
    %$W = \sn$ or $\bn$, let $J$ 
    be some subset of the generators. %and let $s$ be a generator of $W$. 
    Then for all $w \in  W$, we have
\begin{equation} T_s T_{w W_J} = \left\{ \begin{array}{cc}
    q T_{s w W_J} + (q-1)T_{w W_J} & \text{if } swW_J < wW_J, \\
    T_{s w W_J} & \text{if } swW_J > wW_J ,\\
    q T_{w W_J} & \text{if } swW_J = wW_J .\end{array} \right.
    \end{equation}
\end{prop}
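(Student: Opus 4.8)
The plan is to reduce the identity to the factorization of coset sums through minimal representatives. Let $u \in \Wejm$ be the minimal-length element of the coset $wW_J$, so that $wW_J = uW_J$ and $swW_J = suW_J$. By the bijection (\ref{eq:Jfactor}), every $v \in uW_J$ factors uniquely as $v = ux$ with $x \in W_J$ and $\ell(v) = \ell(u) + \ell(x)$; hence (\ref{eq:tvtwtvw}) gives $T_v = T_u T_x$ and therefore $T_{wW_J} = T_u T_{W_J}$, where $T_{W_J} \defeq \sum_{x \in W_J} T_x$. A short pairing argument (grouping $W_J$ into pairs $\{x, s_j x\}$ and applying (\ref{eq:genlhecke})) shows that $T_{s_j} T_{W_J} = q\, T_{W_J}$ for every $s_j \in J$. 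Finally, applying the standard anti-automorphism $T_w \mapsto T_{w^{-1}}$ to (\ref{eq:genlhecke}) and using (\ref{eq:bruhatinverse}) yields the left-handed Hecke relation $T_s T_u = T_{su}$ if $su > u$, and $T_s T_u = q T_{su} + (q-1) T_u$ if $su < u$. With these tools in hand, the whole computation comes down to understanding how $s$ interacts with the minimal representative $u$.

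The key is a trichotomy for $su$ relative to $\Wejm$, which I claim matches the three cases of the proposition exactly:
\begin{enumerate}[(1)]
\item $su > u$ and $su \in \Wejm$: then $su$ is the minimal representative of $swW_J$, so $swW_J > wW_J$, and $T_s T_{wW_J} = T_{su} T_{W_J} = T_{swW_J}$.
\item $su > u$ and $su \notin \Wejm$: then (see below) $su = u s_j$ for some $s_j \in J$, so $swW_J = wW_J$, and $T_s T_{wW_J} = T_{us_j} T_{W_J} = T_u T_{s_j} T_{W_J} = q\, T_u T_{W_J} = q\, T_{wW_J}$.
\item $su < u$: then $su \in \Wejm$, so $su$ is the minimal representative of the strictly smaller coset $swW_J < wW_J$, and $T_s T_{wW_J} = (q T_{su} + (q-1) T_u) T_{W_J} = q\, T_{swW_J} + (q-1) T_{wW_J}$.
\end{enumerate}
The strict comparisons $swW_J > wW_J$ and $swW_J < wW_J$ in cases (1) and (3) are immediate from part (i) of the definition of the Bruhat order on $W/W_J$, since $u$ and $su$ are the relevant minimal representatives and $u \neq su$.

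It remains to justify the two structural claims. For case (3), suppose $su < u$. For any $x \in W_J$ we have $\ell((su)x) \geq \ell(ux) - 1 = \ell(u) + \ell(x) - 1 = \ell(su) + \ell(x)$, using that left multiplication by $s$ changes length by exactly $1$ and that $\ell(ux) = \ell(u)+\ell(x)$ by (\ref{eq:Jfactor}); combined with the universal bound $\ell((su)x) \le \ell(su) + \ell(x)$ this forces equality, so $su$ is minimal in $suW_J$, i.e.\ $su \in \Wejm$. For case (2), write $v = su$, so $v$ has left descent $s$ (as $sv = u < v$) and, since $su \notin \Wejm$, by (\ref{eq:wejmdefns}) a right descent $s_j \in J$ (so $vs_j < v$). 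I expect the main obstacle to be the double-descent lemma: if $v$ has a left descent $s$ and a right descent $s_j$, then either $sv s_j = v$ or $\ell(sv s_j) = \ell(v) - 2$. Applying it here, $svs_j = u s_j$ has length $\ell(u)+1 = \ell(v)$ because $u \in \Wejm$, so the length-drop alternative is impossible and $u s_j = svs_j = su$, giving the claim. A length count as in (\ref{eq:Jfactor}) confirms conversely that $su \in uW_J$ forces $su = us_j$, so that cases (1)--(3) are genuinely exhaustive and mutually exclusive. Assembling the three cases then gives the proposition.
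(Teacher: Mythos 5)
Your proof is correct, but there is nothing in the paper to compare it against: the paper does not prove Proposition~\ref{p:douglass} at all, it simply quotes the result from Douglass \cite[Prop.~2.3]{DougInv}. Your argument therefore serves as a self-contained replacement for that citation, and it is the natural one: reduce to the factorization $T_{wW_J} = T_u T_{W_J}$ via (\ref{eq:Jfactor}) and (\ref{eq:tvtwtvw}), derive the left-handed Hecke relations from (\ref{eq:genlhecke}) using the antiautomorphism $T_w \mapsto T_{w^{-1}}$ and (\ref{eq:bruhatinverse}), and establish the trichotomy for $su$ with $u \in \Wejm$ --- which is exactly Deodhar's lemma for minimal coset representatives. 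Three remarks. First, your direct pairing proof of $T_{s_j}T_{W_J} = qT_{W_J}$ is not a luxury but a necessity: the paper obtains this fact only as Corollary~\ref{c:doug} \emph{of} Proposition~\ref{p:douglass}, so invoking the corollary would have been circular; note only that the pairing uses the left-handed relations, so it logically belongs after your antiautomorphism step rather than before it. Second, the one ingredient you use without proof, the double-descent lemma, is a standard consequence of the exchange condition: writing $v = s\,s_{a_1}\cdots s_{a_k}$ reduced with $sv = s_{a_1}\cdots s_{a_k}$, the condition $vs_j < v$ forces $vs_j$ to equal this word with one letter deleted, and deleting the initial $s$ gives $svs_j = v$ while deleting any other letter gives $\ell(svs_j) \leq \ell(v)-2$; stating it with a citation to \cite{BBCoxeter} would match the paper's level of rigor. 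Third, you correctly close the logical loop: your cases (1)--(3) are exhaustive and map onto the three mutually exclusive coset comparisons of the proposition, so proving each forward implication suffices; and the length bookkeeping these implications rest on ($\ell(sux) = \ell(su)+\ell(x)$ for $x \in W_J$ in cases (1) and (3), and $\ell(us_j) = \ell(u)+1$ in case (2), both coming from membership in $\Wejm$) is all verified correctly.
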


\begin{cor}\label{c:doug}
    If $v \in W_J$, then
%\begin{equation}\label{eq:dougcor}
    $T_{v}T_{W_J} = q^{\ell(v)} T_{W_J}$.
%\end{equation}
   \end{cor}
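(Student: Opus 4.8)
The plan is to prove the identity by induction on $\ell(v)$, using Douglass's action formula (Proposition~\ref{p:douglass}) as the engine. Here $T_{W_J}$ denotes the coset sum $T_{eW_J} = \sum_{u \in W_J} T_u$ defined in (\ref{eq:cosetsum}), attached to the coset $eW_J = W_J$. The base case $v = e$ is immediate, since $T_e$ is the unit and $q^{\ell(e)} = q^0 = 1$.

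The key single step is to evaluate $T_s T_{W_J}$ for a generator $s \in J$. Because $s$ itself lies in $W_J$, left multiplication by $s$ fixes the coset $W_J$, i.e.\ $sW_J = W_J = eW_J$. Applying Proposition~\ref{p:douglass} with this $w = e$ therefore lands us in the third (equality) case $swW_J = wW_J$, which gives $T_s T_{W_J} = q\,T_{W_J}$. This is exactly the rescaling by one factor of $q$ that will drive the induction.

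For the inductive step, suppose $\ell(v) \geq 1$ and fix a reduced expression $v = s_{i_1} \cdots s_{i_\ell}$; since $v \in W_J$, every such reduced expression uses only generators from $J$ (the standard fact that $W_J$ is itself a Coxeter system on the generating set $J$). Peeling off the first letter, write $v = s v'$ with $s = s_{i_1} \in J$ and $v' = s_{i_2}\cdots s_{i_\ell} \in W_J$, so that $\ell(v') = \ell(v) - 1$ and, by the length-additivity implication (\ref{eq:tvtwtvw}), $T_v = T_s T_{v'}$. Then
\[
T_v T_{W_J} = T_s \bigl(T_{v'} T_{W_J}\bigr) = T_s\bigl(q^{\ell(v')} T_{W_J}\bigr) = q^{\ell(v')}\, T_s T_{W_J} = q^{\ell(v')}\cdot q\, T_{W_J} = q^{\ell(v)} T_{W_J},
\]
where the second equality is the inductive hypothesis, the third uses that $q^{\ell(v')}$ is a central scalar, and the fourth is the key step above.

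I expect no serious obstacle here: the argument is a routine induction. The only point meriting a moment's care is the choice, at each inductive step, of a generator $s \in J$ to strip from $v$ — this is precisely what guarantees $sW_J = W_J$ and hence the clean equality case of Douglass's formula; it rests on the standard observation that a reduced word for an element of the parabolic subgroup $W_J$ involves only the generators in $J$.
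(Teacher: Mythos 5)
Your proof is correct and is precisely the argument the paper leaves implicit: the corollary is stated without proof because it follows from Proposition~\ref{p:douglass} by exactly the induction you give, peeling off generators $s \in J$ (each satisfying $sW_J = W_J$, hence the third case $T_s T_{W_J} = qT_{W_J}$) and using length-additivity (\ref{eq:tvtwtvw}) to write $T_v = T_s T_{v'}$. No gaps; the one point you flag — that reduced words for elements of $W_J$ use only generators in $J$ — is indeed the standard fact needed.
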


Let us consider the type-$\msfBC$ Hecke algebra $\hbnq$, generated by
$T_{s_0}, \dotsc, T_{s_{n-1}}$ subject to relations
\begin{equation}\label{eq:hbnqpresentation}
  \begin{alignedat}2
    T_{s_i}^2 &= (q-1)T_{s_i} + qT_e &\quad &\text{for $i = 0, \dotsc, n-1$,}\\
    T_{s_0}T_{s_1}T_{s_0}T_{s_1} &= T_{s_1}T_{s_0}T_{s_1}T_{s_0}, &\quad & \\
%    ts_i &= s_it &\quad &\text{for $i=2,\dotsc,n-1$,}\\
    T_{s_i}T_{s_j} &= T_{s_j}T_{s_i} &\quad &\text{for $i,j \geq 0$ and $|i-j| \geq 2$,}\\
    T_{s_i}T_{s_j}T_{s_i} &= T_{s_j}T_{s_i}T_{s_j} &\quad &\text{for $i,j \geq 1$ and $|i-j| = 1$,}
  \end{alignedat}
\end{equation}
a subset $J = J_{[a,b]}$ of generators of $\bn$ for some $a < b$, and the submodule $\hbjq \defeq H_J$ of $\hbnq$. By \cite[\S 13.3.7]{BilleyLak},
rational smoothness of Schubert varieties of types $\msfA$, $\msfBB$, $\msfC$ 
can be characterized in terms of pattern avoidance. In particular, when $w \in \bn$ \avoidsp, we have the stronger property
that type-$\msfBB$ and $\msfC$ Schubert varieties indexed by $w$ are both smooth. Since a reversal in $\bn$ avoids both of these patterns, we have the following.

\begin{prop}\label{p:wtcreversal}
    For each reversal $s_{[a,b]} \in \bn$,
    %$W$ equal to $\sn$ or $\bn$ and $w \in W$ a reversal,
    we have
\begin{equation*}
\wtc{s_{[a,b]}}q = \sum_{v\leq s_{[a,b]}} T_v.     
    \end{equation*}
\end{prop}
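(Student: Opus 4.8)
The plan is to extract the coefficients of $\wtc{s_{[a,b]}}q$ directly from the defining expansion (\ref{eq:wtcdef}) and to show that every Kazhdan--Lusztig polynomial appearing there is trivial. Indeed (\ref{eq:wtcdef}) gives
\[
\wtc{s_{[a,b]}}q \;=\; \sum_{v \leq s_{[a,b]}} P_{v,\,s_{[a,b]}}(q)\, T_v ,
\]
and since $\{T_v \mid v \in \bn\}$ is a basis, the asserted identity is equivalent to the single claim that $P_{v,\,s_{[a,b]}}(q) = 1$ for every $v \leq s_{[a,b]}$. So the entire content of the proposition is the triviality of these polynomials.

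First I would invoke the intersection-cohomology interpretation of the $P_{v,w}$ recalled just before the statement: if the Schubert variety $\schub{s_{[a,b]}}$ is rationally smooth, then all of $\{P_{v,\,s_{[a,b]}}(q) \mid v \leq s_{[a,b]}\}$ equal $1$. By the Billey--Lakshmibai pattern-avoidance criteria, the type-$\msfBB$ and type-$\msfC$ Schubert varieties indexed by a given $w \in \bn$ are smooth once $w$ \avoidsp in its long one-line notation; applied to $w = s_{[a,b]}$ this yields rational smoothness, hence $P_{v,\,s_{[a,b]}}(q) \equiv 1$, and feeding this back into the display above proves the proposition. The one genuine step here --- and the main obstacle if one wants a self-contained argument --- is verifying that a reversal \avoidsp. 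For this I would read off the explicit long one-line notations displayed above: each reversal is a word built from a decreasing block of large-magnitude negatives, a single reversed central block, and an increasing block of large-magnitude positives, so it consists of only a bounded number of monotone runs. A short case analysis then shows that no four positions can realize the relative orders $3412$ or $4231$, since any such occurrence would force two of its indices into a single monotone run in a manner violating the required inequalities.

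Alternatively --- and this is the cleaner route I would prefer --- one can avoid geometry altogether by exploiting that $s_{[a,b]}$ is, by construction, the unique maximal-length element of the finite parabolic subgroup $W_{J_{[a,b]}}$ determined by (\ref{eq:jabdef}). For the longest element of any finite parabolic $W_J$ the associated full flag variety is smooth, so $P^{W_J}_{v,\,s_{[a,b]}} \equiv 1$; and because Kazhdan--Lusztig polynomials are compatible with parabolic inclusion (for $v,w \in W_J$ one has $P^{\bn}_{v,w} = P^{W_J}_{v,w}$), this forces $P^{\bn}_{v,\,s_{[a,b]}}(q) = 1$ for all $v \leq s_{[a,b]}$. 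Either route returns the same conclusion to (\ref{eq:wtcdef}) and completes the proof.
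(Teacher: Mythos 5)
Your proposal is correct, and your first route is essentially the paper's own argument: the paper proves the proposition exactly by citing the Billey--Lakshmibai pattern-avoidance characterization of smoothness in types $\msfBB$ and $\msfC$, asserting that a reversal avoids $3412$ and $4231$, and then invoking the fact recorded after (\ref{eq:wtcdef}) that rational smoothness of $\schub{w}$ forces $P_{v,w}(q)\equiv 1$ for all $v\leq w$. The one place your write-up is looser than it should be is the avoidance verification: ``a bounded number of monotone runs'' is not by itself a reason, since the patterns $3412$ and $4231$ themselves have few monotone runs. The crisp argument is that the long one-line notation of $s_{[a,b]}$ is the identity word with one or two disjoint blocks of positions reversed, so every inversion of $s_{[a,b]}$ has both of its positions inside a single reversed (hence decreasing) block; the inversion pairs occurring in either pattern then force all four positions of a would-be occurrence into one such block, while each pattern also contains an ascent --- a contradiction. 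Your second route is genuinely different from the paper and arguably cleaner: $s_{[a,b]}$ is by construction the longest element of the finite parabolic subgroup $W_{J_{[a,b]}}$; for the longest element $w_0$ of any finite Coxeter group one has $P_{v,w_0}\equiv 1$ (a standard fact, provable directly from the Kazhdan--Lusztig recursion, so the appeal to flag-variety smoothness is dispensable); and since $v\leq s_{[a,b]}$ forces $v\in W_{J_{[a,b]}}$, the equally standard invariance of Kazhdan--Lusztig polynomials under parabolic inclusion transfers this to $\bn$. What the paper's route buys is coherence with the rest of the paper, where pattern avoidance and smooth Schubert varieties are the organizing theme (Proposition~\ref{p:Fv}, Theorem~\ref{t:BCwiringdiagramprod2}); what your algebraic route buys is independence from geometry and from any pattern-avoidance bookkeeping, at the cost of importing two classical Hecke-algebra facts the paper never states.
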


Each Kazhdan--Lusztig basis element %$\wtc wq$ (\ref{eq:wtcdef}) 
indexed by a reversal $s_{[a,b]}$ is itself a coset sum (\ref{eq:cosetsum}) for the parabolic subgroup generated by $J = J_{[a.b]}$:
%$\{ s_a, \dotsc, s_{b-1}\}$,
\begin{equation}
T_{W_J} = 
    \wtc{s_{[a,b]}}q.
    %= T_{W_J}.
\end{equation}
By (\ref{eq:Jfactor}), other defining basis elements (\ref{eq:hbjqdef}) of $\hbjq$ can be written as
\begin{equation}\label{eq:TuWJ}
    T_{uW_J} = T_u \wtc{s_{[a,b]}} q,
\end{equation}
for $u \in \Wejm$.
Thus by Proposition~\ref{p:douglass}, each product of the form
    \begin{equation}\label{eq:klprod}
        \wtc{s_{[a_1,b_1]}}q \cdots \wtc{s_{[a_{k},b_{k}]}}q 
        %= \sum_{u \in W} c_u T_u,
    \end{equation}
belongs to $H^{\msfBC}_{J_{[a_k,b_k]}}(q)$ and we can expand it in the defining basis of this module as follows.

\begin{prop}\label{p:klprodcosetexpansion} 
Suppose that the product of the first $k-1$ factors of (\ref{eq:klprod})
expands in the natural basis of $\hbnq$
as

\begin{equation}\label{eq:klprodkminus1}
        \wtc{s_{[a_1,b_1]}}q \cdots \wtc{s_{[a_{k-1},b_{k-1}]}}q = \sum_{v \in \bn} c_v T_v,
    \end{equation}
    for some polynomials $\{ c_v = c_v(q)\,|\, v \in \bn \}$ in $\mathbb Z[q]$,
    and define $J = J_{[a_k,b_k]}$. Then the full product (\ref{eq:klprod})
expands in the defining basis of $\hbjq$ as
\begin{equation}\label{eq:klprodcosetexpansion}
%    \wtc{s_{[a_1,b_1]}}q \cdots \wtc{s_{[a_{k-1},b_{k-1}]}}q\cdot \wtc {s_{[a_k,b_k]}} q = 
\sum_{w \in \Wejm} \ntksp \bigg( \sum_{u \in  W_{J}} \ntnsp q^{\ell(u)} c_{wu} \bigg) T_w \wtc {s_{[a_k,b_k]}} q.
\end{equation}
\end{prop}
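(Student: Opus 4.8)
The plan is to substitute the natural expansion (\ref{eq:klprodkminus1}) of the first $k-1$ factors into the full product (\ref{eq:klprod}) and then evaluate the resulting products $T_v \wtc{s_{[a_k,b_k]}}q$ one term at a time, exploiting the coset structure of $\bn$ relative to $W_J$. The essential point is that the last factor is nothing but the coset sum for $J = J_{[a_k,b_k]}$: as recorded just before (\ref{eq:TuWJ}) we have $\wtc{s_{[a_k,b_k]}}q = T_{W_J}$. Multiplying (\ref{eq:klprodkminus1}) on the right by this element, the full product becomes
\begin{equation*}
\wtc{s_{[a_1,b_1]}}q \cdots \wtc{s_{[a_k,b_k]}}q = \sum_{v \in \bn} c_v\, T_v T_{W_J},
\end{equation*}
so it suffices to simplify each $T_v T_{W_J}$.

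For each $v \in \bn$ I would apply the factorization (\ref{eq:Jfactor}), writing $v = wu$ with $w \in \Wejm$ the minimal representative of $vW_J$ and $u \in W_J$, so that $\ell(w) + \ell(u) = \ell(wu)$. This length additivity makes (\ref{eq:tvtwtvw}) applicable, giving $T_v = T_w T_u$, and hence
\begin{equation*}
T_v T_{W_J} = T_w\,(T_u T_{W_J}) = q^{\ell(u)}\, T_w T_{W_J},
\end{equation*}
where the last equality is Corollary~\ref{c:doug} applied to $u \in W_J$. Since $w \in \Wejm$, the surviving product $T_w T_{W_J}$ is precisely the defining basis element $T_{wW_J} = T_w \wtc{s_{[a_k,b_k]}}q$ of $\hbjq$, by (\ref{eq:TuWJ}).

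It then remains only to reorganize the sum. Using that $v \mapsto (w,u)$ is the bijection (\ref{eq:Jfactor}), I would rewrite $\sum_{v \in \bn}$ as the double sum $\sum_{w \in \Wejm}\sum_{u \in W_J}$ and collect the terms sharing a common $w$, producing
\begin{equation*}
\sum_{w \in \Wejm}\Bigl(\sum_{u \in W_J} q^{\ell(u)}\, c_{wu}\Bigr) T_w \wtc{s_{[a_k,b_k]}}q,
\end{equation*}
which is exactly (\ref{eq:klprodcosetexpansion}). There is no substantive obstacle here; the computation is essentially immediate once the ingredients are assembled. The only point meriting a moment's care is the reindexing: one must use the factorization $v = wu$ consistently so that each $v \in \bn$ contributes to exactly one pair $(w,u)$, and this is guaranteed by the bijectivity asserted in (\ref{eq:Jfactor}). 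The real content of the proposition is thus concentrated in Corollary~\ref{c:doug} together with the length-additive factorization of $\bn$ along $W_J$.
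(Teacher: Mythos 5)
Your proof is correct and follows essentially the same route as the paper's: both factor each $v$ as $wu$ via (\ref{eq:Jfactor}), use length additivity with (\ref{eq:tvtwtvw}) to write $T_v = T_wT_u$, apply Corollary~\ref{c:doug} to absorb $T_u$ into $\wtc{s_{[a_k,b_k]}}q$ as the factor $q^{\ell(u)}$, and reindex the sum over $\bn$ as a double sum over $\Wejm \times W_J$. The only difference is cosmetic ordering of these steps, so there is nothing to add.
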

\begin{proof}
Using
(\ref{eq:Jfactor}) to factor each element $v$ on the right-hand side of 
(\ref{eq:klprodkminus1}) as $v = wu$ with $w \in \Wejm$, $u \in W_J$,
%(\ref{eq:Jfactor})
we may express the product (\ref{eq:klprod}) as

\begin{equation*}  
\sum_{w \in \Wejm} \ntksp \bigg( \sum_{u \in  W_{J}} \ntksp c_{wu} T_w T_u \bigg) \wtc 
 {s_{[a_k,b_k]}} q.
\end{equation*}

Then by Corollary \ref{c:doug}, 
we have $T_u \wtc{s_{[a_k,b_k]}}q
%$ equals $
=
q^{\ell(u)}\wtc{s_{[a_k,b_k]}}q$ and the claimed formula.
\end{proof}

\section{Type-$\msfBC$ star networks}\label{sec: BCstarnets}

To graphically represent products of %Kazhdan--Lusztig basis elements
the form (\ref{eq:klprod}),
we extend the idea of type-$\msfBC$ wiring
diagrams to include planar networks in which interior vertices may have indegrees and outdegrees greater than 2. In particular, we associate to each reversal $s_{[a,b]} \in \bn$ a {\em type-$\msfBC$ simple star network} $\smash{F_{[a,b]}}$ having $2n$ source vertices on the left and $2n$ sink vertices on the right, both labeled $\ol n, \dotsc, n$ from bottom to top. For the three cases $a=b$, $a = \ol b$ ($b > 0$), $0 < a < b$ of reversals,
we include edges and $0$, $1$, or $2$ additional
interior vertices as follows.

\begin{enumerate}

\item When $a=b$, 
$\smash{F_{[b,b]} = F_\emptyset}$ has a directed edge from source $i$ to sink $i$, for $i = \ol n,\dotsc,n$.
\item When $a = \ol b$ ($b > 0$), $F_{[a,b]}$ has one interior vertex. For $i = a,\dotsc, b$, a directed edge begins at source $i$ and terminates at the interior vertex, and another directed edge begins at the interior vertex and terminates at sink $i$.
\item When $0 < a < b$, $F_{[a,b]}$ has two interior vertices, which we call {\em upper} and {\em lower}. For $i = a, \dotsc, b$, a directed edge begins at source $i$ and terminates at the upper interior vertex, and another directed edge begins at the upper interior vertex and terminates at sink $i$. For $i = \ol b, \dotsc, \ol a$, a directed edge begins at source $i$ and terminates at the lower interior vertex, and another directed edge begins at the lower interior vertex and terminates at sink $i$. 
\end{enumerate}

For economy of figures, we omit vertices and left-to-right orientations of edges. For example, the seven type-$\msfBC$ simple star networks

\begin{equation}\label{eq:csimplestars}        
\begin{tikzpicture}[scale=.5,baseline=0]
  \node at (-.9,2.5) {$\scriptstyle 3$};
  \node at (-.9,1.5) {$\scriptstyle 2$};
  \node at (-.9,0.5) {$\scriptstyle 1$};
  \node at (0.9,2.5) {$\scriptstyle 3$};
  \node at (0.9,1.5) {$\scriptstyle 2$};
  \node at (0.9,0.5) {$\scriptstyle 1$};
  \node at (-.9,-2.5) {$\scriptstyle {\ol 3}$};
  \node at (-.9,-1.5) {$\scriptstyle {\ol 2}$};
  \node at (-.9,-0.5) {$\scriptstyle {\ol 1}$};
  \node at (0.9,-2.5) {$\scriptstyle {\ol 3}$};
  \node at (0.9,-1.5) {$\scriptstyle {\ol 2}$};
  \node at (0.9,-0.5) {$\scriptstyle {\ol 1}$};
\draw[-, very thick] (-.5,2.5) -- (.5,2.5);
\draw[-, very thick] (-.5,1.5) -- (.5,1.5);
\draw[-, very thick] (-.5,0.5) -- (.5,0.5);
\draw[-, very thick] (-.5,-0.5) -- (.5,-0.5);
\draw[-, very thick] (-.5,-1.5) -- (.5,-1.5);
\draw[-, very thick] (-.5,-2.5) -- (.5,-2.5);
\node at (0, -3.8) {$F_\emptyset$};
\end{tikzpicture}
\ntnsp,\quad
\begin{tikzpicture}[scale=.5,baseline=0]
  \node at (-.9,2.5) {$\scriptstyle 3$};
  \node at (-.9,1.5) {$\scriptstyle 2$};
  \node at (-.9,0.5) {$\scriptstyle 1$};
  \node at (0.9,2.5) {$\scriptstyle 3$};
  \node at (0.9,1.5) {$\scriptstyle 2$};
  \node at (0.9,0.5) {$\scriptstyle 1$};
  \node at (-.9,-2.5) {$\scriptstyle {\ol 3}$};
  \node at (-.9,-1.5) {$\scriptstyle {\ol 2}$};
  \node at (-.9,-0.5) {$\scriptstyle {\ol 1}$};
  \node at (0.9,-2.5) {$\scriptstyle {\ol 3}$};
  \node at (0.9,-1.5) {$\scriptstyle {\ol 2}$};
  \node at (0.9,-0.5) {$\scriptstyle {\ol 1}$};
\draw[-, very thick] (-.5,2.5) -- (.5,2.5);
\draw[-, very thick] (-.5,1.5) -- (.5,1.5);
\draw[-, very thick] (-.5,0.5) -- (.5,-0.5);
\draw[-, very thick] (-.5,-0.5) -- (.5,0.5);
\draw[-, very thick] (-.5,-1.5) -- (.5,-1.5);
\draw[-, very thick] (-.5,-2.5) -- (.5,-2.5);
\node at (0, -3.8) {$F_{[\ol1,1]}$};
\end{tikzpicture}
\ntnsp,\quad
%\ ,\;
\begin{tikzpicture}[scale=.5,baseline=0]
  \node at (-.9,2.5) {$\scriptstyle 3$};
  \node at (-.9,1.5) {$\scriptstyle 2$};
  \node at (-.9,0.5) {$\scriptstyle 1$};
  \node at (0.9,2.5) {$\scriptstyle 3$};
  \node at (0.9,1.5) {$\scriptstyle 2$};
  \node at (0.9,0.5) {$\scriptstyle 1$};
  \node at (-.9,-2.5) {$\scriptstyle {\ol 3}$};
  \node at (-.9,-1.5) {$\scriptstyle {\ol 2}$};
  \node at (-.9,-0.5) {$\scriptstyle {\ol 1}$};
  \node at (0.9,-2.5) {$\scriptstyle {\ol 3}$};
  \node at (0.9,-1.5) {$\scriptstyle {\ol 2}$};
  \node at (0.9,-0.5) {$\scriptstyle {\ol 1}$};
\draw[-, very thick] (-.5,2.5) -- (.5,2.5);
\draw[-, very thick] (-.5,1.5) -- (.5,-1.5);
\draw[-, very thick] (-.5,0.5) -- (.5,-0.5);
\draw[-, very thick] (-.5,-0.5) -- (.5,0.5);
\draw[-, very thick] (-.5,-1.5) -- (.5,1.5);
\draw[-, very thick] (-.5,-2.5) -- (.5,-2.5);
\node at (0, -3.8) {$F_{[\ol2,2]}$};
\end{tikzpicture}
\ntnsp,\quad
%\ ,\;
\begin{tikzpicture}[scale=.5,baseline=0]
  \node at (-.9,2.5) {$\scriptstyle 3$};
  \node at (-.9,1.5) {$\scriptstyle 2$};
  \node at (-.9,0.5) {$\scriptstyle 1$};
  \node at (0.9,2.5) {$\scriptstyle 3$};
  \node at (0.9,1.5) {$\scriptstyle 2$};
  \node at (0.9,0.5) {$\scriptstyle 1$};
  \node at (-.9,-2.5) {$\scriptstyle {\ol 3}$};
  \node at (-.9,-1.5) {$\scriptstyle {\ol 2}$};
  \node at (-.9,-0.5) {$\scriptstyle {\ol 1}$};
  \node at (0.9,-2.5) {$\scriptstyle {\ol 3}$};
  \node at (0.9,-1.5) {$\scriptstyle {\ol 2}$};
  \node at (0.9,-0.5) {$\scriptstyle {\ol 1}$};
\draw[-, very thick] (-.5,2.5) -- (.5,-2.5);
\draw[-, very thick] (-.5,1.5) -- (.5,-1.5);
\draw[-, very thick] (-.5,0.5) -- (.5,-0.5);
\draw[-, very thick] (-.5,-0.5) -- (.5,0.5);
\draw[-, very thick] (-.5,-1.5) -- (.5,1.5);
\draw[-, very thick] (-.5,-2.5) -- (.5,2.5);
\node at (0, -3.8) {$F_{[\ol3,3]}$};
\end{tikzpicture}
\ntnsp,\quad
\begin{tikzpicture}[scale=.5,baseline=0]
  \node at (-.9,2.5) {$\scriptstyle 3$};
  \node at (-.9,1.5) {$\scriptstyle 2$};
  \node at (-.9,0.5) {$\scriptstyle 1$};
  \node at (0.9,2.5) {$\scriptstyle 3$};
  \node at (0.9,1.5) {$\scriptstyle 2$};
  \node at (0.9,0.5) {$\scriptstyle 1$};
  \node at (-.9,-2.5) {$\scriptstyle {\ol 3}$};
  \node at (-.9,-1.5) {$\scriptstyle {\ol 2}$};
  \node at (-.9,-0.5) {$\scriptstyle {\ol 1}$};
  \node at (0.9,-2.5) {$\scriptstyle {\ol 3}$};
  \node at (0.9,-1.5) {$\scriptstyle {\ol 2}$};
  \node at (0.9,-0.5) {$\scriptstyle {\ol 1}$};
\draw[-, very thick] (-.5,2.5) -- (.5,2.5);
\draw[-, very thick] (-.5,1.5) -- (.5,0.5);
\draw[-, very thick] (-.5,0.5) -- (.5,1.5);
\draw[-, very thick] (-.5,-0.5) -- (.5,-1.5);
\draw[-, very thick] (-.5,-1.5) -- (.5,-0.5);
\draw[-, very thick] (-.5,-2.5) -- (.5,-2.5);
\node at (0, -3.8) {$F_{[1,2]}$};
\end{tikzpicture}
\ntnsp,\quad
%\ ,\;
\begin{tikzpicture}[scale=.5,baseline=0]
  \node at (-.9,2.5) {$\scriptstyle 3$};
  \node at (-.9,1.5) {$\scriptstyle 2$};
  \node at (-.9,0.5) {$\scriptstyle 1$};
  \node at (0.9,2.5) {$\scriptstyle 3$};
  \node at (0.9,1.5) {$\scriptstyle 2$};
  \node at (0.9,0.5) {$\scriptstyle 1$};
  \node at (-.9,-2.5) {$\scriptstyle {\ol 3}$};
  \node at (-.9,-1.5) {$\scriptstyle {\ol 2}$};
  \node at (-.9,-0.5) {$\scriptstyle {\ol 1}$};
  \node at (0.9,-2.5) {$\scriptstyle {\ol 3}$};
  \node at (0.9,-1.5) {$\scriptstyle {\ol 2}$};
  \node at (0.9,-0.5) {$\scriptstyle {\ol 1}$};
\draw[-, very thick] (-.5,2.5) -- (.5,1.5);
\draw[-, very thick] (-.5,1.5) -- (.5,2.5);
\draw[-, very thick] (-.5,0.5) -- (.5,0.5);
\draw[-, very thick] (-.5,-0.5) -- (.5,-0.5);
\draw[-, very thick] (-.5,-1.5) -- (.5,-2.5);
\draw[-, very thick] (-.5,-2.5) -- (.5,-1.5);
\node at (0, -3.8) {$F_{[2,3]}$};
\end{tikzpicture}
\ntnsp,\quad
%\ ,\;
\begin{tikzpicture}[scale=.5,baseline=0]
  \node at (-.9,2.5) {$\scriptstyle 3$};
  \node at (-.9,1.5) {$\scriptstyle 2$};
  \node at (-.9,0.5) {$\scriptstyle 1$};
  \node at (0.9,2.5) {$\scriptstyle 3$};
  \node at (0.9,1.5) {$\scriptstyle 2$};
  \node at (0.9,0.5) {$\scriptstyle 1$};
  \node at (-.9,-2.5) {$\scriptstyle {\ol 3}$};
  \node at (-.9,-1.5) {$\scriptstyle {\ol 2}$};
  \node at (-.9,-0.5) {$\scriptstyle {\ol 1}$};
  \node at (0.9,-2.5) {$\scriptstyle {\ol 3}$};
  \node at (0.9,-1.5) {$\scriptstyle {\ol 2}$};
  \node at (0.9,-0.5) {$\scriptstyle {\ol 1}$};
\draw[-, very thick] (-.5,2.5) -- (.5,0.5);
\draw[-, very thick] (-.5,1.5) -- (.5,1.5);
\draw[-, very thick] (-.5,0.5) -- (.5,2.5);
\draw[-, very thick] (-.5,-0.5) -- (.5,-2.5);
\draw[-, very thick] (-.5,-1.5) -- (.5,-1.5);
\draw[-, very thick] (-.5,-2.5) -- (.5,-0.5);
\node at (0, -3.8) {$F_{[1,3]}$};
\end{tikzpicture}
\ntnsp,\quad
%\ ,\;
\ntksp
\end{equation}
correspond to the reversals
$s_\emptyset = e$, $s_{[\ol1,1]}$, $s_{[\ol2,2]}$, $s_{[\ol3,3]}$, $s_{[1,2]}$, $s_{[2,3]}$, $s_{[1,3]}$ in $\mfb3$.
Traditional {\em type-$\msfBC$ wiring diagrams} are formed by concatenating networks 
$F_{[\ol1,1]}, F_{[1,2]}, \dotsc, F_{[n-1,n]}$
indexed by intervals of cardinality $2$. More generally, define {\em type-$\msfBC$ star networks} to be the set $\net{BC}n$ of all concatenations
\begin{equation}\label{eq:BCconcat}
%F = 
F_{[a_1,b_1]} \circ \cdots \circ F_{[a_k,b_k]}
\end{equation}
of type-$\msfBC$ simple star networks, 
where 
$F_{[a_i,b_i]} \circ F_{[a_{i+1},b_{i+1}]}$ is formed, informally, by identifying all sink vertices of $F_{[a_i,b_i]}$ with the corresponding source vertices of $F_{[a_{i+1},b_{i+1}]}$.  More precisely,
for all $j$ we replace the unique edge $(x, \snk\;j)$ of $F_{[a_i,b_i]}$ and the unique
edge $(\src\;j, y)$ of $F_{[a_{i+1},b_{i+1}]}$
by a single edge $(x,y)$
in $F_{[a_i,b_i]} \circ F_{[a_{i+1},b_{i+1}]}$. Again for economy, we omit vertices and edge orientations in the resulting directed multigraph. For instance, the type-$\msfBC$ star network $F_{[1,3]} \circ F_{[2,3]} \circ F_{[1,2]} \circ F_{[\ol1,1]} \in \net{BC}3$, drawn once economically without vertices and edge orientations and once with all details, is

\begin{equation}\label{eq:concatex}
%F = \ntnsp
\begin{tikzpicture}[scale=.5,baseline=0]%s1s0s2
  \node at (-.9,2.5) {$\scriptstyle 3$};
  \node at (-.9,1.5) {$\scriptstyle 2$};
  \node at (-.9,0.5) {$\scriptstyle 1$};
  \node at (3.9,2.5) {$\scriptstyle 3$};
  \node at (3.9,1.5) {$\scriptstyle 2$};
  \node at (3.9,0.5) {$\scriptstyle 1$};
  \node at (-.9,-2.5) {$\scriptstyle {\ol 3}$};
  \node at (-.9,-1.5) {$\scriptstyle {\ol 2}$};
  \node at (-.9,-0.5) {$\scriptstyle {\ol 1}$};
  \node at (3.9,-2.5) {$\scriptstyle {\ol 3}$};
  \node at (3.9,-1.5) {$\scriptstyle {\ol 2}$};
  \node at (3.9,-0.5) {$\scriptstyle {\ol 1}$};
  \draw[-, very thick]
  (-.5,2.5) -- (0,1.5) -- (.5,2.5) -- (1,2) -- (1.5,2.5) -- (3.5,2.5);
  \draw[-, very thick]
  (-.5,1.5) -- (.5,1.5) -- (1,2) -- (2,1) -- (2.5,1.5) -- (3.5,1.5);
  \draw[-, very thick]
  (-.5,0.5) -- (0,1.5) -- (.5,0.5) -- (1.5,0.5) -- (2,1) -- (3,0) -- (3.5,0.5);
  \draw[-, very thick]
  (-.5,-0.5) -- (0,-1.5) -- (.5,-0.5) -- (1.5,-0.5) -- (2,-1) -- (3,0) -- (3.5,-0.5);
  \draw[-, very thick]
  (-.5,-1.5) -- (.5,-1.5) -- (1,-2) -- (2,-1) -- (2.5,-1.5) -- (3.5,-1.5);
  \draw[-, very thick]
  (-.5,-2.5) -- (0,-1.5) -- (.5,-2.5) -- (1,-2) -- (1.5,-2.5) -- (3.5,-2.5);
\end{tikzpicture}%\ntnsp,
\quad = \quad
\begin{tikzpicture}[scale=.5,baseline=0]
% \begin{tikzpicture}[scale=.5,baseline=0,decoration={markings,mark=at position 0.5 with {\arrow{>}} }]
% \begin{tikzpicture}[scale=.5,baseline=0,decoration={markings,mark=between positions 0.1 and 0.9 step 0.2 with {\arrow[red]{>}} }]
  
  % % Define source and sink labels using loops
  % \foreach \i/\y in {3/2.5, 2/1.5, 1/0.5} {
  %   \node (src\i) at (-2.6,\y) {$\scriptstyle{\src\; \i}$};
  %   \node (snk\i) at (8.2,\y) {$\scriptstyle{\snk\; \i}$};
  %   \node (src-\i) at (-2.6,-\y) {$\scriptstyle{\src\; \ol{\i}}$};
  %   \node (snk-\i) at (8.2,-\y) {$\scriptstyle{\snk\; \ol{\i}}$};
  % }
  % % Source node bullets
  % \node (srcbul3) at (-1.15,2.5){$\bullet$};
  % \node (srcbul2) at (-1.15,1.45){$\bullet$};
  % \node (srcbul1) at (-1.15,0.45){$\bullet$};
  % \node (srcbul-1) at (-1.15,-0.55){$\bullet$};
  % \node (srcbul-2) at (-1.15,-1.55){$\bullet$};
  % \node (srcbul-3) at (-1.15,-2.55){$\bullet$};
  % % Sink node bullets
  % \node (snkbul3) at (6.95,2.45){$\bullet$};
  % \node (snkbul2) at (6.95,1.45){$\bullet$};
  % \node (snkbul1) at (6.95,0.45){$\bullet$};
  % \node (snkbul-1) at (6.95,-0.55){$\bullet$};
  % \node (snkbul-2) at (6.95,-1.55){$\bullet$};
  % \node (snkbul-3) at (6.95,-2.55){$\bullet$};
  % %% Internal vertices
  % \node (x1top) at (-0.05,1.5){$\bullet$};
  % \node (x1bot) at (-0.05,-1.55){$\bullet$};
  % \node (x2top) at (1.95,1.95){$\bullet$};
  % \node (x2bot) at (1.95,-2.05){$\bullet$};
  % \node (x3top) at (3.95,0.95){$\bullet$};
  % \node (x3bot) at (3.95,-1.05){$\bullet$};
  % \node (x4mid) at (5.95,-0.05){$\bullet$};

  % Define source and sink labels using loops
  \foreach \i/\y in {3/2.5, 2/1.5, 1/0.5} {
    \node (src\i) at (-2.6,\y) {$\scriptstyle{\src\; \i}$};
    \node (snk\i) at (8.2,\y) {$\scriptstyle{\snk\; \i}$};
    \node (src-\i) at (-2.6,-\y) {$\scriptstyle{\src\; \ol{\i}}$};
    \node (snk-\i) at (8.2,-\y) {$\scriptstyle{\snk\; \ol{\i}}$};
  }
  % Source node bullets
  \node (srcbul3) at (-1,2.45){$\bullet$};
  \node (srcbul2) at (-1,1.45){$\bullet$};
  \node (srcbul1) at (-1,0.45){$\bullet$};
  \node (srcbul-1) at (-1,-0.55){$\bullet$};
  \node (srcbul-2) at (-1,-1.55){$\bullet$};
  \node (srcbul-3) at (-1,-2.55){$\bullet$};
  % Sink node bullets
  \node (snkbul3) at (7,2.5){$\bullet$};
  \node (snkbul2) at (7,1.5){$\bullet$};
  \node (snkbul1) at (7,0.5){$\bullet$};
  \node (snkbul-1) at (7,-0.5){$\bullet$};
  \node (snkbul-2) at (7,-1.5){$\bullet$};
  \node (snkbul-3) at (7,-2.5){$\bullet$};
  %% Internal vertices
  \node (x1top) at (-0,1.5){$\bullet$};
  \node (x1bot) at (-0,-1.52){$\bullet$};
  \node (x2top) at (2,1.95){$\bullet$};
  \node (x2bot) at (2,-2.05){$\bullet$};
  \node (x3top) at (4,0.98){$\bullet$};
  \node (x3bot) at (4,-1.02){$\bullet$};
  \node (x4mid) at (6,-0.03){$\bullet$};

  \foreach \ymod in {1, -1} {
   % Path at level of source 3
    \draw[-, thick, postaction={decorate, decoration={markings,mark=at position 0.12 with {\arrow[black, scale=1.32, ultra thick]{>}}, mark=at position 0.335 with {\arrow[black, scale=1.32, ultra thick]{>}}, mark=at position 0.75 with {\arrow[black, scale=1.32, ultra thick]{>}} }}]
    (-1,2.5*\ymod) -- (0,1.5*\ymod) .. controls (1,2.5*\ymod) .. (2,2*\ymod) .. controls (3,2.5*\ymod) .. (4, 2.5*\ymod) -- (7, 2.5*\ymod);

    % Path at level of source 2
    \draw[-, thick, postaction={decorate, decoration={markings,mark=at position 0.1 with {\arrow[black, scale=1.32, ultra thick]{>}}, mark=at position 0.27 with {\arrow[black, scale=1.32, ultra thick]{>}}, mark=at position 0.52 with {\arrow[black, scale=1.32, ultra thick]{>}}, mark=at position 0.91 with {\arrow[black, scale=1.32, ultra thick]{>}} }}]
    (-1,1.5*\ymod) -- (0,1.5*\ymod) .. controls (1,1.5*\ymod) .. (2,2*\ymod) -- (4,1*\ymod) .. controls (5,1.5*\ymod) .. (6, 1.5*\ymod) -- (7,1.5*\ymod);

    % Path at level of source 1
    \draw[-, thick, postaction={decorate, decoration={markings,mark=at position 0.12 with {\arrow[black, scale=1.32, ultra thick]{>}}, mark=at position 0.43 with {\arrow[black, scale=1.32, ultra thick]{>}}, mark=at position 0.78 with {\arrow[black, scale=1.32, ultra thick]{>}}, mark=at position 0.98 with {\arrow[black, scale=1.32, ultra thick]{>}} }}]
    (-1,0.5*\ymod) -- (0,1.5*\ymod) .. controls (1,0.5*\ymod) .. (2,0.5*\ymod) .. controls (3,0.5*\ymod) .. (4,1*\ymod) -- (6,0*\ymod) -- (7,0.5*\ymod);
  }
  
  %%% Paths -3, -2, and -1:
  % \draw[-, very thick]
  % (-1,-0.5) -- (0,-1.5) -- (1,-0.5) -- (3,-0.5) -- (4,-1) -- (6,0) -- (7,-0.5);
  % \draw[-, very thick]
  % (-1,-1.5) -- (1,-1.5) -- (2,-2) -- (4,-1) -- (5,-1.5) -- (7,-1.5);
  % \draw[-, very thick]
  % (-1,-2.5) -- (0,-1.5) -- (1,-2.5) -- (2,-2) -- (3,-2.5) -- (7,-2.5);
\end{tikzpicture}\ntnsp. 
\end{equation}

All networks (\ref{eq:BCconcat}) in $\net{BC}n$ can be drawn to exhibit mirror symmetry in a horizontal line separating the positively indexed sources and sinks from the negatively indexed sources and sinks. Alluding to this symmetry, we will refer to sources $j$ and $\ol j$, sinks $j$ and $\ol j$, and central vertices of the upper and lower stars of $F_{[a_i,b_i]}$ (for $a_i > 0$) in (\ref{eq:BCconcat}) as {\em reflections} of one another.  The central vertex of $F_{[\ol b, b]}$ is its own reflection. Similarly, we will refer to corresponding pairs of edges as reflections of one another.

Following \cite{BWHex}, \cite{CSkanTNNChar}, we
%($\star$ Introduce the need to 
consider families
$\pi=(\pi_{\ol n}, \dotsc, \pi_{\ol 1}, \pi_1, \dotsc, \pi_n)$
of paths {\em covering} a star network $F \in \net{BC}n$, i.e., using all edges in $F$. Call $\pi$ a {\em $\msfBC$-path family} if
for each index $i \in [1,n]$, 
the reflections of the edges forming path $\pi_{i}$ are precisely the edges forming path $\pi_{\ol i}$. That is, $\pi_{\ol i}$ must be the reflection of $\pi_i$.
Define $\pi$ to have {\em type $u = u_{\ol n} \cdots u_{\ol 1} u_1 \cdots u_n \in \bn$} if for all $i$, path $\pi_i$ begins at source $i$ and terminates at sink $u_i$.  In this case, we write $\snk(\pi_i) = u_i$, or $\snk_F(\pi_i) = u_i$ if there is any danger of confusion. For $F \in \net{BC}n$ and $u \in \bn$, define the sets
\begin{equation}\label{eq:BCpathfams}
  \begin{gathered}
    \PiBC(F) = \{ \pi \,|\, \pi \text{ a $\msfBC$-path family covering } F \},\\
    \PiBC_u(F) = \{ \pi \in \PiBC(F) \,|\, \type(\pi) = u \}.
    \end{gathered}
\end{equation}

For example, three path families 

\begin{equation}\label{eq:pathfamex}
  \pi = 
% \begin{tikzpicture}[scale=.4,baseline=0]%s1s0s2
%   \node at (-.9,2.5) {$\scriptstyle 3$};
%   \node at (-.9,1.5) {$\scriptstyle 2$};
%   \node at (-.9,0.5) {$\scriptstyle 1$};
%   \node at (1.9,2.5) {$\scriptstyle 3$};
%   \node at (1.9,1.5) {$\scriptstyle 2$};
%   \node at (1.9,0.5) {$\scriptstyle 1$};
%   \node at (-.9,-2.5) {$\scriptstyle {\ol 3}$};
%   \node at (-.9,-1.5) {$\scriptstyle {\ol 2}$};
%   \node at (-.9,-0.5) {$\scriptstyle {\ol 1}$};
%   \node at (1.9,-2.5) {$\scriptstyle {\ol 3}$};
%   \node at (1.9,-1.5) {$\scriptstyle {\ol 2}$};
%   \node at (1.9,-0.5) {$\scriptstyle {\ol 1}$};
%   \draw[-, very thick]
%   (-.5,2.5) -- (.5,2.5) -- (1,2) -- (1.5,2.5);
%   \draw[-, dotted, very thick]
%   (-.5,1.5) -- (0,1) -- (.5,1.5) -- (1,2) -- (1.5,1.5);
%   \draw[-, dashed, very thick]
%   (-.5,0.5) -- (0,1) -- (.5,0.5) -- (1,0) -- (1.5,0.5);
%   \draw[-, thin]
%   (-.5,-0.43) -- (0,-.93) -- (.5,-0.43) -- (1,0.07) -- (1.5,-0.43);
%   \draw[-, thin]
%   (-.5,-0.57) -- (0,-1.07) -- (.5,-0.57) -- (1,-0.07) -- (1.5,-0.57);
%   \draw[-, dotted, thick]
%   (-.5,-1.5) -- (0,-1) -- (.5,-1.5) -- (1,-2) -- (1.5,-1.5);
%   \draw[-]
%   (-.5,-2.5) -- (.5,-2.5) -- (1,-2) -- (1.5,-2.5);
% \end{tikzpicture}
\begin{tikzpicture}[scale=.5,baseline=0]%s1s0s2
  \node at (-.9,2.5) {$\scriptstyle 3$};
  \node at (-.9,1.5) {$\scriptstyle 2$};
  \node at (-.9,0.5) {$\scriptstyle 1$};
  \node at (3.9,2.5) {$\scriptstyle 3$};
  \node at (3.9,1.5) {$\scriptstyle 2$};
  \node at (3.9,0.5) {$\scriptstyle 1$};
  \node at (-.9,-2.5) {$\scriptstyle {\ol 3}$};
  \node at (-.9,-1.5) {$\scriptstyle {\ol 2}$};
  \node at (-.9,-0.5) {$\scriptstyle {\ol 1}$};
  \node at (3.9,-2.5) {$\scriptstyle {\ol 3}$};
  \node at (3.9,-1.5) {$\scriptstyle {\ol 2}$};
  \node at (3.9,-0.5) {$\scriptstyle {\ol 1}$};
  \draw[-, very thick, green]
  (-.5,2.5) -- (0,1.5) -- (.5,2.5) -- (1,2) -- (1.5,2.5) -- (3.5,2.5);
  \draw[-, very thick, red]
  (-.5,1.5) -- (.5,1.5) -- (1,2) -- (2,1) -- (2.5,1.5) -- (3.5,1.5);
  \draw[-, very thick, blue]
  (-.5,0.5) -- (0,1.5) -- (.5,0.5) -- (1.5,0.5) -- (2,1) -- (3,0) -- (3.5,0.5);
  \draw[-, very thick, densely dashed, blue]
  (-.5,-0.5) -- (0,-1.5) -- (.5,-0.5) -- (1.5,-0.5) -- (2,-1) -- (3,0) -- (3.5,-0.5);
  \draw[-, very thick, densely dashed, red]
  (-.5,-1.5) -- (.5,-1.5) -- (1,-2) -- (2,-1) -- (2.5,-1.5) -- (3.5,-1.5);
  \draw[-, very thick, densely dashed, green]
  (-.5,-2.5) -- (0,-1.5) -- (.5,-2.5) -- (1,-2) -- (1.5,-2.5) -- (3.5,-2.5);
  %\draw[-, color=magenta, very thick]
  %(-.5,2.5) -- (.5,2.5) -- (1,2) -- (1.5,2.5) -- (2.5,2.5);
  %\draw[-, color=green, very thick]
  %(-.5,1.5) -- (0,1) -- (.5,1.5) -- (1,2) -- (1.5,1.5) -- (2.5,1.5);
  %\draw[-, color=cyan, very thick]
  %(-.5,0.5) -- (0,1) -- (.5,0.5) -- (1.5,0.5) -- (2.0,0) -- (2.5,0.5);
  %\draw[-, color=purple, very thick]
  %(-.5,-0.5) -- (0,-1) -- (.5,-0.5) -- (1.5,-0.5) -- (2.0,0) -- (2.5,-0.5);
  %\draw[-, color=brown, very thick]
  %(-.5,-1.5) -- (0,-1) -- (.5,-1.5) -- (1,-2) -- (1.5,-1.5) -- (2.5,-1.5);
  %\draw[-, color=blue, very thick]
  %(-.5,-2.5) -- (.5,-2.5) -- (1,-2) -- (1.5,-2.5) -- (2.5,-2.5);
\end{tikzpicture}\ntnsp,
  \qquad 
  \sigma = \ntnsp
%   \begin{tikzpicture}[scale=.4,baseline=0]%s1s0s2
%   \node at (-.9,2.5) {$\scriptstyle 3$};
%   \node at (-.9,1.5) {$\scriptstyle 2$};
%   \node at (-.9,0.5) {$\scriptstyle 1$};
%   \node at (1.9,2.5) {$\scriptstyle 3$};
%   \node at (1.9,1.5) {$\scriptstyle 2$};
%   \node at (1.9,0.5) {$\scriptstyle 1$};
%   \node at (-.9,-2.5) {$\scriptstyle {\ol 3}$};
%   \node at (-.9,-1.5) {$\scriptstyle {\ol 2}$};
%   \node at (-.9,-0.5) {$\scriptstyle {\ol 1}$};
%   \node at (1.9,-2.5) {$\scriptstyle {\ol 3}$};
%   \node at (1.9,-1.5) {$\scriptstyle {\ol 2}$};
%   \node at (1.9,-0.5) {$\scriptstyle {\ol 1}$};
%   \draw[-, very thick]
%   (-.5,2.5) -- (.5,2.5) -- (1,2) -- (1.5,1.5);
%   \draw[-, dotted, very thick]
%   (-.5,0.5) -- (0,1) -- (.5,1.5) -- (1,2) -- (1.5,2.5);
%   \draw[-, dashed, very thick]
%   (-.5,1.5) -- (0,1) -- (.5,0.5) -- (1,0) -- (1.5,-0.5);
%   \draw[-]
%   (-.5,-2.5) -- (.5,-2.5) -- (1,-2) -- (1.5,-1.5);
%   \draw[-, dotted, thick]
%   (-.5,-0.5) -- (0,-1) -- (.5,-1.5) -- (1,-2) -- (1.5,-2.5);
%   \draw[-, thin]
%   (-.5,-1.43) -- (0,-.93) -- (.5,-0.43) -- (1,0.07) -- (1.5,0.57);
%   \draw[-, thin]
%   (-.5,-1.57) -- (0,-1.07) -- (.5,-0.57) -- (1,-.07) -- (1.5,0.43);
% \end{tikzpicture}
\begin{tikzpicture}[scale=.5,baseline=0]%s1s0s2
  \node at (-.9,2.5) {$\scriptstyle 3$};
  \node at (-.9,1.5) {$\scriptstyle 2$};
  \node at (-.9,0.5) {$\scriptstyle 1$};
  \node at (3.9,2.5) {$\scriptstyle 3$};
  \node at (3.9,1.5) {$\scriptstyle 2$};
  \node at (3.9,0.5) {$\scriptstyle 1$};
  \node at (-.9,-2.5) {$\scriptstyle {\ol 3}$};
  \node at (-.9,-1.5) {$\scriptstyle {\ol 2}$};
  \node at (-.9,-0.5) {$\scriptstyle {\ol 1}$};
  \node at (3.9,-2.5) {$\scriptstyle {\ol 3}$};
  \node at (3.9,-1.5) {$\scriptstyle {\ol 2}$};
  \node at (3.9,-0.5) {$\scriptstyle {\ol 1}$};
  \draw[-, very thick, green]
  (-.5,2.5) -- (0,1.5) -- (.5,0.5) -- (1.5,0.5) -- (2,1) -- (2.5,1.5) -- (3.5,1.5);
  \draw[-, very thick, red]
  (-.5,1.5) -- (.5,1.5) -- (1,2) -- (2,1) -- (3,0) -- (3.5,-0.5);
  \draw[-, very thick, blue]
  (-.5,0.5) -- (0,1.5) -- (.5,2.5) -- (1,2) -- (1.5,2.5) -- (3.5,2.5);
  \draw[-, very thick, densely dashed, blue]
  (-.5,-0.5) -- (0,-1.5) -- (.5,-2.5) -- (1,-2) -- (1.5,-2.5) -- (3.5,-2.5);
  \draw[-, very thick, densely dashed, red]
  (-.5,-1.5) -- (.5,-1.5) -- (1,-2) -- (2,-1)  -- (3,0) -- (3.5,0.5);
  \draw[-, very thick, densely dashed, green]
  (-.5,-2.5) -- (0,-1.5) -- (.5,-0.5) -- (1.5,-0.5) -- (2,-1) -- (2.5,-1.5) -- (3.5,-1.5);
  %\draw[-, color=magenta, very thick]
  %(-.5,2.5) -- (.5,2.5) -- (1,2) -- (1.5,1.5) -- (2.5,1.5);
  %\draw[-, color=green, very thick]
  %(-.5,1.5) -- (0,1) -- (.5,0.5) -- (1.5,0.5) -- (2.0,0) -- (2.5,-0.5);
  %\draw[-, color=cyan, very thick]
  %(-.5,0.5) -- (0,1) -- (.5,1.5) -- (1,2) -- (1.5,2.5) -- (2.5,2.5);
  %\draw[-, color=purple, very thick]
  %(-.5,-0.5) -- (0,-1) -- (.5,-1.5) -- (1,-2) -- (1.5,-2.5) -- (2.5,-2.5);
  %\draw[-, color=brown, very thick]
  %(-.5,-1.5) -- (0,-1) -- (.5,-0.5) -- (1.5,-0.5) -- (2.0,0) -- (2.5,0.5);
  %\draw[-, color=blue, very thick]
  %(-.5,-2.5) -- (.5,-2.5) -- (1,-2) -- (1.5,-1.5) -- (2.5,-1.5);
\end{tikzpicture}\ntnsp, \qquad
  \tau = \ntnsp
\begin{tikzpicture}[scale=.5,baseline=0]%s1s0s2
  \node at (-0.9,2.5) {$\scriptstyle 3$};
  \node at (-0.9,1.5) {$\scriptstyle 2$};
  \node at (-0.9,0.5) {$\scriptstyle 1$};
   \node at (3.9,2.5) {$\scriptstyle 3$};
  \node at (3.9,1.5) {$\scriptstyle 2$};
  \node at (3.9,0.5) {$\scriptstyle 1$};
  \node at (-.9,-2.5) {$\scriptstyle \ol 3$};
  \node at (-0.9,-1.5) {$\scriptstyle {\ol 2}$};
  \node at (-0.9,-0.5) {$\scriptstyle {\ol 1}$};
  \node at (3.9,-2.5) {$\scriptstyle \ol 3$};
  \node at (3.9,-1.5) {$\scriptstyle {\ol 2}$};
  \node at (3.9,-0.5) {$\scriptstyle {\ol 1}$};
   \draw[-, very thick, green]
  (-.5,2.5) -- (0,1.5) -- (.5,2.5) -- (1,2) -- (1.5,2.5) -- (3.5,2.5);
  \draw[-, very thick, red]
  (-.5,1.5) -- (.5,1.5) -- (1,2) -- (2,1) -- (2.5,1.5) -- (3.5,1.5);
  \draw[-, very thick, blue]
  (-.5,0.5) -- (0,1.5) -- (.5,0.5) -- (1.5,0.5) -- (2,1) -- (3,0) -- (3.5,0.5);
  \draw[-, very thick, densely dashed, blue]
  (-.5,-0.5) -- (0,-1.5) -- (.5,-1.5) -- (1,-2) -- (2,-1) -- (3,0) -- (3.5,-0.5);
  \draw[-, very thick, densely dashed, red]
  (-.5,-1.5) -- (0,-1.5) -- (.5,-0.5) -- (1.5,-0.5) -- (2,-1) -- (2.5,-1.5) -- (3.5,-1.5);
  \draw[-, very thick, densely dashed, green]
  (-.5,-2.5) -- (0,-1.5) -- (.5,-2.5) -- (1,-2) -- (1.5,-2.5) -- (3.5,-2.5);
  %\draw[-, color=blue, very thick]
  %(-1,1.5) -- (0,.5) -- (1,1.5);
  %\draw[-, color=red, very thick]
  %(-1,0.5) -- (0,1.5) -- (1,0.5);
  %\draw[-, color=red, ultra thick, densely dashed]
  %(-1,-0.5) -- (-.5,-1) -- (0,-.5) -- (.5,-1) -- (1,-0.5);
  %\draw[-, color=blue, ultra thick, densely dashed]
  %(-1,-1.5) -- (-.5,-1) -- (0,-1.5) -- (.5,-1) -- (1,-1.5);
\end{tikzpicture}%\ntnsp,
\end{equation}

covering $F \defeq F_{[1,3]} \circ F_{[2,3]} \circ F_{[1,2]} \circ F_{[\ol1,1]}$
(\ref{eq:concatex})
satisfy 
$\pi \in \PiBC_{123}(F)$, $\sigma \in 
\PiBC_{3\ol12}(F)$, $\tau \not \in \PiBC(F)$, with $\tau$ failing to be a $\msfBC$-path family because $\tau_1, \tau_{\ol 1}$ in blue (also $\tau_2, \tau_{\ol 2}$ in red) are not reflections of one another.

By Lemma~\ref{l:longinversions}, we may interpret
$\ell(u)$ for all path families in $\PiBC_u(F)$ as follows.
\begin{obs}\label{o:pathinversions}
    %For $u \in \bn$, 
    Within 
    each path family $\pi = (\pi_{\ol n}, \dotsc, \pi_{\ol 1}, \pi_1, \dotsc, \pi_n) \in \PiBC_u(F)$, %contains 
    there are 
    $\ell(u)$ pairs $(\pi_i, \pi_j)$ of paths satisfying $|i| < j$ and $\snk(\pi_j) < \snk(\pi_i)$. 
\end{obs}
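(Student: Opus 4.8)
The plan is to observe first that the count in the statement is intrinsic to the type $u$ and does not really depend on the network $F$ or on the chosen family of paths. Indeed, by the definition of type we have $\snk(\pi_i) = u_i$ for every $i \in [\ol n, n]$, so the number of pairs $(\pi_i, \pi_j)$ with $|i| < j$ and $\snk(\pi_j) < \snk(\pi_i)$ equals the number of index pairs $(i,j)$ with $|i| < j$ and $u_j < u_i$; this quantity is the same for every $F$ and every $\pi \in \PiBC_u(F)$. Reading the sinks $\snk(\pi_{\ol n}), \dotsc, \snk(\pi_n)$ in order simply recovers the long one-line notation $u_{\ol n} \cdots u_{\ol 1} u_1 \cdots u_n$ of $u$. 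Hence it suffices to prove the purely combinatorial assertion that this number of index pairs equals $\ell(u)$, and for this I would appeal to Lemma~\ref{l:longinversions}.

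The remaining task is to match the pairs counted here against the pairs of letters counted in Lemma~\ref{l:longinversions}. That lemma counts pairs of letters $(i,j)$ with $|i| \le j$ for which letter $j$ precedes letter $i$ in the long one-line notation, which is a comparison of \emph{positions}; the present count instead fixes the pair of positions (the sources) and compares the \emph{letters} (the sinks). To interchange these roles I would pass to $u^{-1}$, recording that $\ell(u) = \ell(u^{-1})$ since reversing a reduced expression for $u$ yields a reduced expression for $u^{-1}$. Because the position of a letter $v$ in the long one-line notation of $u^{-1}$ is $u_v$, applying Lemma~\ref{l:longinversions} to $u^{-1}$ converts its position comparison into exactly the sink comparison $u_j < u_i$, so that $\ell(u^{-1})$ becomes the count of index pairs $(i,j)$ with $|i| \le j$ and $u_j < u_i$.

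The delicate point, and the step I expect to require the most care, is the treatment of the boundary pairs: those with $|i| = j$, equivalently $i = \ol j$ with $j > 0$, which are precisely the reflected pairs $(\pi_{\ol j}, \pi_j)$. The signed-permutation identity $u_{\ol j} = \ol{u_j}$ shows that such a pair satisfies the sink inequality exactly when $u_j < 0$, so these pairs supply the sign-change sum $\sum_{i:\,u_i < 0} |u_i|$ in the length formula of Lemma~\ref{l:BClength}, while the pairs with $|i| < j$ supply the ordinary inversions $\inv(u_1 \cdots u_n)$. Thus the crux is to confirm that these reflected boundary pairs are included on a par with the $|i| = j$ terms governed by the ``$\le$'' in Lemma~\ref{l:longinversions}; once that bookkeeping is settled, Lemma~\ref{l:BClength} (or, as above, Lemma~\ref{l:longinversions} applied to $u^{-1}$) assembles the two contributions into the total $\ell(u)$.
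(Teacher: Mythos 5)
Your core argument (first two paragraphs) is correct, and it is essentially the paper's own proof made precise: the paper presents the observation as an immediate consequence of Lemma~\ref{l:longinversions}, and the detail you supply --- that the lemma compares \emph{positions} of two letters, whereas the observation compares the \emph{letters} (sinks) sitting at two positions (sources), so that one must apply the lemma to $u^{-1}$ and invoke $\ell(u)=\ell(u^{-1})$ --- is exactly what makes that citation legitimate. Note that once Lemma~\ref{l:longinversions} has been applied to $u^{-1}$, you are finished: that application counts precisely the pairs $(i,j)$ with $|i|\le j$ and $u_j<u_i$, reflected pairs $i=\ol j$ included, so no separate treatment of boundary pairs is required. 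You are also right that the inequality in the statement must be read as $|i|\le j$ rather than the printed $|i|<j$: the paper's own example for $u=3\ol12$ counts the reflected pair $(\sigma_{\ol2},\sigma_2)$, and with the strict inequality the count would be $2\ne 3=\ell(u)$.

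Your third paragraph, however, misallocates the two contributions and is false as written; since your second paragraph already settles everything, you should delete it (or repair it as follows). The reflected pairs $(\pi_{\ol j},\pi_j)$ contribute $\#\{j>0 : u_j<0\}$, the number of negative entries of $u$ --- not the sum $\sum_{i:\,u_i<0}|u_i|$. Correspondingly, the pairs with $|i|<j$ contribute more than $\inv(u_1\cdots u_n)$: besides the pairs with $0<i<j$, which do give $\inv(u_1\cdots u_n)$, there are pairs with $i<0$ and $|i|<j$, and these number $\#\{(h,j) : 0<h<j,\ u_h+u_j<0\}$. For example, for $u=\ol2\,\ol1$ one has $\ell(u)=3$, with $\sum_{i:\,u_i<0}|u_i|=3$ and $\inv(u_1u_2)=0$, yet there are exactly two reflected inversion pairs and one non-reflected one. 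The two bookkeepings are reconciled by the identity $\sum_{i:\,u_i<0}|u_i| \;=\; \#\{j : u_j<0\}+\#\{(h,j) : 0<h<j,\ u_h+u_j<0\}$ (cf.~\cite[Prop.~8.1.1]{BBCoxeter}), but nothing in your proof requires this identity, and invoking Lemma~\ref{l:BClength} through the incorrect decomposition would break the argument rather than complete it.
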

\noindent
For example, in the path family $\sigma \in \PiBC_{3 \ol1 2}(F)$ in (\ref{eq:pathfamex}),
the $\ell(3\ol12) = 3$ pairs of paths satisfying
the conditions of the observation are $(\sigma_{\ol2}, \sigma_2)$, $(\sigma_1,\sigma_3)$, and $(\sigma_2, \sigma_3)$.

Each element $F \in \net{BC}n$ combinatorially interprets an element in $\hbnq$. To describe this interpretation, we extend the defect statistic~\cite{BWHex}, \cite{CSkanTNNChar}, \cite{Deodhar90} described in Section~\ref{sec: introduction}, using the inversions appearing in Lemma~\ref{l:longinversions}. Given a $\msfBC$-path family $\pi$ covering $F$, define a {\em type-$\msfBC$ defect} of $\pi$ to be
a triple $(\pi_i, \pi_j, k)$ with $|i| \leq j$, and $\pi_i$, $\pi_j$ meeting at one of the internal vertices of $F_{[c_k,d_k]}$ after having crossed an odd number of times. Let $\dfctbc(\pi)$ denote the number of type-$\msfBC$ defects of $\pi$.
We say that $F \in \net{\msfBC}n$ {\em graphically represents}
\begin{equation}\label{eq:Gtozbnq}
\sum_{\pi \in \PiBC(F)} \nTksp q^{\dfct^{\msfBC}(\pi)} T_{\type(\pi)}
\end{equation}
{\em as an element of $\hbnq$}. Refining the sets (\ref{eq:BCpathfams}) by counting defects of path families in them, let us define

\begin{equation}\label{eq:BCpathfams_type_defect}
\PiBC_{u,d}(F) = \{ \pi \in \PiBC_u(F) \,|\, \dfctbc(\pi) = d \}.
\end{equation}
Using this notation we may express (\ref{eq:Gtozbnq}) as
\begin{equation*}
    \sum_{u \in \bn} \sum_{d \geq 0} |\PiBC_{u,d}(F)| q^d T_u.
\end{equation*}

Observe that a single vertex can be the location of more than one defect. For example, consider the star network and $\msfBC$-path family

\begin{equation}\label{eq:all010}
  F_{[\ol2,2]} \circ F_{[\ol1,1]} \circ F_{[1,2]} \circ F_{[\ol2,2]} =
\begin{tikzpicture}[scale=.5,baseline=-5]
\node at (-.4,1.5) {$\scriptstyle{2}$};
\node at (-.4,0.5) {$\scriptstyle{1}$};  
\node at (-.4,-0.5) {$\scriptstyle{\ol1}$};
\node at (-.4,-1.5) {$\scriptstyle{\ol2}$};  
\node at (4.4,1.5) {$\scriptstyle{2}$};
\node at (4.4,0.5) {$\scriptstyle{1}$};  
\node at (4.4,-0.5) {$\scriptstyle{\ol1}$};
\node at (4.4,-1.5) {$\scriptstyle{\ol2}$};  
\draw[-, very thick] (0,1.5) -- (1,-1.5) -- (2,-1.5) -- (3,-.5) -- (3.5,0) -- (4,.5);
\draw[-, very thick] (0,-1.5) -- (1,1.5) -- (2,1.5) -- (3,.5) -- (3.5,0) -- (4,-.5);
\draw[-, very thick] (0,.5) -- (1,-.5) -- (2,.5) -- (3,1.5) -- (3.5,0) -- (4,1.5);
\draw[-, very thick] (0,-.5) -- (1,.5) -- (2,-.5) -- (3,-1.5) -- (3.5,0) -- (4,-1.5);
\end{tikzpicture},\qquad
\pi = 
\begin{tikzpicture}[scale=.5,baseline=-5]
\node at (-.4,1.5) {$\scriptstyle{2}$};
\node at (-.4,0.5) {$\scriptstyle{1}$};  
\node at (-.4,-0.5) {$\scriptstyle{\ol1}$};
\node at (-.4,-1.5) {$\scriptstyle{\ol2}$};  
\node at (4.4,1.5) {$\scriptstyle{2}$};
\node at (4.4,0.5) {$\scriptstyle{1}$};  
\node at (4.4,-0.5) {$\scriptstyle{\ol1}$};
\node at (4.4,-1.5) {$\scriptstyle{\ol2}$};  
\draw[-, very thick, red]
(0,1.5) -- (1,-1.5) -- (2,-1.5) -- (3,-.5) -- (3.5,0) -- (4,-.5);
\draw[-, very thick, blue]
(0,.5) -- (1,-.5) -- (2,.5) -- (3,1.5) -- (3.5,0) -- (4,1.5);
\draw[-, very thick, densely dashed, blue]
(0,-.5) -- (1,.5) -- (2,-.5) -- (3,-1.5) -- (3.5,0) -- (4,-1.5);
\draw[-, very thick, densely dashed, red]
(0,-1.5) -- (1,1.5) -- (2,1.5) -- (3,.5) -- (3.5,0) -- (4,.5);
\end{tikzpicture}.
\end{equation}
We have $\dfct^{\msfBC}(\pi) = 4$: defects of $\pi$ are $(\pi_{\ol1}, \pi_1, 2)$, $(\pi_{\ol1}, \pi_2, 3)$, $(\pi_1, \pi_2, 4)$, $(\pi_{\ol2}, \pi_2, 4)$.

\section{Main results}\label{s:main}

When one compares a concatenation $F'$ of $k-1$ simple star networks and a concatenation $F$ of $F'$ with one more simple star network, one sees bijections between certain sets of path families in $F$ and $F'$.

\begin{prop}\label{p:BCbijectiontoshorterpathreversals}
  Fix a sequence $(s_{[a_1,b_1]}, \dotsc, s_{[a_{k},b_k]})$
  of reversals in $\bn$, define the generator subset $J 
  = J_{[a_k,b_k]}$,
  %= \{s_{a_k}, \dotsc, s_{b_k-1}\}$,
  and choose an element $w \in \Wejm$.
  %satisfying $wv > w$ for all $v \leq s_{[a_k,b_k]}$.
  Consider two type-$\msfBC$ star networks
  \begin{equation*}
    F' = F_{[a_1,b_1]} \circ \cdots \circ F_{[a_{k-1}, b_{k-1}]}
    \qquad\text{ and }\qquad
    F = F' \circ F_{[a_k,b_k]}.
%    F = F_{[a_1,b_1]} \circ \cdots \circ F_{[a_{k}, b_{k}]}.
  \end{equation*}
  Then for each element $v \in W_J$ and each $d \geq 0$, we have a bijection
\begin{equation}\label{eq:BCbijectionsreversals}
\Pi_{wv,d}(F)
    \quad \overset{1-1}{\longleftrightarrow} \ \
    \bigcup_{u \in W_J} %\mfb{[a_k,b_k]}}
    %\ntksp
    \Pi_{wu\ntnsp,\; d-\ell(u)}(F').
\end{equation}
  \end{prop}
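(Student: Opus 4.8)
The plan is to realize the bijection (\ref{eq:BCbijectionsreversals}) explicitly as a pair of mutually inverse \emph{truncation} and \emph{extension} maps that peel off, or glue on, the action of the last simple star $F_{[a_k,b_k]}$. Given $\pi \in \Pi_{wv,d}(F)$, I would let $\pi'=\pi|_{F'}$ be the path family obtained by deleting from each $\pi_i$ its final segment inside $F_{[a_k,b_k]}$. Because the deleted segments of $\pi_i$ and $\pi_{\ol i}$ are reflections of one another, $\pi'$ is again a $\msfBC$-path family, now covering $F'$. The ways of routing the block paths through the single interior vertex (when $a_k=\ol{b_k}$) or through the upper and lower interior vertices (when $0<a_k<b_k$) of $F_{[a_k,b_k]}$ are exactly the reflection-symmetric rearrangements of the block $[a_k,b_k]\cup[\ol{b_k},\ol{a_k}]$, so they are in bijection with $W_J$; write $\rho\in W_J$ for the routing realized by $\pi$. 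Concatenating types as in the definition preceding (\ref{eq:BCpathfams}) gives $\type(\pi)=\type(\pi')\,\rho$, and factoring $\type(\pi')=wu$ with $w\in\Wejm$, $u\in W_J$ as in (\ref{eq:Jfactor}), the requirement $wu\rho=wv$ forces $\rho=u^{-1}v$. Thus $\pi'$ determines $u$ uniquely and lands in the summand of the right-hand side of (\ref{eq:BCbijectionsreversals}) indexed by that $u$. Conversely, from $\pi'\in\Pi_{wu,\,d-\ell(u)}(F')$ I would extend through $F_{[a_k,b_k]}$ by the unique routing $\rho=u^{-1}v\in W_J$; reflection symmetry of $\rho$ and of $\pi'$ guarantees the result is a $\msfBC$-path family of type $wu\cdot u^{-1}v=wv$, and the two constructions are manifestly inverse.

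It then remains to verify that this correspondence shifts the grading by exactly $\ell(u)$, i.e.\ $\dfctbc(\pi)=\dfctbc(\pi')+\ell(u)$. The first point I would establish is that the defects of $\pi$ located at the last star depend only on $\pi'$, not on the routing $\rho$: a type-$\msfBC$ defect $(\pi_i,\pi_j,k)$ at $F_{[a_k,b_k]}$ requires $\pi_i,\pi_j$ to meet at one of its interior vertices---which occurs precisely when $\snk_{F'}(\pi'_i)$ and $\snk_{F'}(\pi'_j)$ lie in the block and on the same side of the reflection axis---and to have crossed an odd number of times \emph{strictly before} that vertex, that is, within $F'$. Both conditions are intrinsic to $\pi'$, so every extension of a fixed $\pi'$ acquires the same number $D(\pi')$ of new defects and $\dfctbc(\pi)=\dfctbc(\pi')+D(\pi')$. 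It thus suffices to prove $D(\pi')=\ell(u)$.

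To compute $D(\pi')$ I would translate crossing parity into inversions: two paths $\pi'_i,\pi'_j$ cross an odd number of times in the planar network $F'$ exactly when $\snk_{F'}(\pi'_j)<\snk_{F'}(\pi'_i)$, so by Observation~\ref{o:pathinversions} the pairs $(\pi'_i,\pi'_j)$ with $|i|\le j$ and odd crossing parity are counted by $\ell(\type(\pi'))=\ell(wu)=\ell(w)+\ell(u)$. The defects $D(\pi')$ are those such pairs whose two sinks meet at a common interior vertex of $F_{[a_k,b_k]}$. When $a_k=\ol{b_k}$ all block paths meet at the single vertex, so $D(\pi')$ is the number of these inversions having both sink values in the block; since $w$ is the minimal coset representative, Lemma~\ref{l:minreps}(ii) shows $w$ itself has no block--block inversions, whence, via the length additivity $\ell(wu)=\ell(w)+\ell(u)$ of (\ref{eq:Jfactor}), right multiplication by $u$ contributes exactly $\ell(u)$ of them. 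For $0<a_k<b_k$ I would argue analogously from Lemma~\ref{l:minreps}(i), now separating the block--block inversions into upper--upper, lower--lower, and cross pairs; the defects are only the first two, and the content is to show that reflection symmetry together with the $|i|\le j$ convention selects one representative from each reflection-related pair so that the surviving block inversions total $\ell(u)$ (with $\ell(u)$ read off the short one-line notation via Lemma~\ref{l:BClength}) rather than twice that.

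The main obstacle is precisely this last bookkeeping in the case $0<a_k<b_k$: confirming that, among the long one-line inversions of $wu$ counted by Lemma~\ref{l:longinversions}, exactly those meeting at a common star vertex survive the $|i|\le j$ restriction without duplication and sum to $\ell(u)$. Once $D(\pi')=\ell(u)$ is in hand for both reversal types, the truncation and extension maps intertwine the gradings by $d\mapsto d-\ell(u)$, and the construction of the previous paragraphs yields the desired bijection (\ref{eq:BCbijectionsreversals}).
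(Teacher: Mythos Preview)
Your proposal is correct and follows essentially the same approach as the paper: truncation/extension gives the bijection, and the defect shift $D(\pi')=\ell(u)$ follows from Observation~\ref{o:pathinversions}, Lemma~\ref{l:minreps}, and length additivity (the paper organizes this slightly differently, first treating $v=e$ and then passing to general $v$ via a defect-preserving bijection $\phi_{v,y}$ that changes only the routing through the last star, but the content is the same). Your worry about the case $0<a_k<b_k$ is unnecessary---the additivity argument works uniformly, since $u\in W_J$ fixes all non-block values and keeps each block value in its own half, so the inversions of $wu$ and $w$ (in the sense of Observation~\ref{o:pathinversions}) agree except on same-half block--block pairs, where $w$ has none by Lemma~\ref{l:minreps}(i); hence these number exactly $\ell(wu)-\ell(w)=\ell(u)$ with no separate reflection bookkeeping required.
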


  \begin{proof} 
  Fix $w$, $J$ as in the proposition.
  We demonstrate a bijection (\ref{eq:BCbijectionsreversals})
  in the case that $v = e$. Observe that each path family $\pi$ in $\Pi_w(F)$ can be decomposed uniquely as the concatenation of its truncation $\pi'$ to $F'$ with its truncation $\pi''$ to $F_{[a_k,b_k]}$: for some $u \in W_J$ we have 
  \begin{equation}\label{eq:pipi'pi''}
      \pi = \pi' \circ \pi'',\qquad
      \pi' \in \Pi_{wu}(F'), \qquad \pi'' \in \Pi_{u^{-1}}(F_{[a_k,b_k]}).
\end{equation}
      Let us write the first truncation map $\pi \mapsto \pi'$ with $w$ unspecified as $\trunc\ntnsp: \Pi(F) \rightarrow \Pi(F')$, and let us use $\psi$ to denote the restriction of $\trunc$ to $\Pi_w(F)$,
   
    \begin{equation}\label{eq:phidef}
    \psi:\Pi_w(F) \rightarrow \bigcup_{u \in W_J} \Pi_{wu}(F').
    \end{equation}

 We claim that $\psi$ is bijective. To see this, choose a path family $\tau$ belonging to $\cup_{u \in W_J} \Pi_{wu}(F')$ and let $y = y(\tau)$ be the element of $W_J$ such that $\type(\tau) = wy$. The set $\Pi_{y^{-1}}(F_{[a_k,b_k]})$ contains a unique element, call it $\sigma_{y^{-1}}$.
  Now define a map 
  \begin{equation}\label{eq:kappadef}
  \begin{aligned}
    \kappa: \bigcup_{u \in W_J} \Pi_{wu}(F') &\rightarrow \Pi_w(F)\\
    \tau &\mapsto \tau \circ \sigma_{y(\tau)^{-1}}.
    \end{aligned}
    \end{equation}
     It is easy to see that $\kappa$ inverts $\psi$. 

Now we claim that for $\pi \in \Pi_{w}(F)$, the path families $\pi$, $\pi' = \psi(\pi)$ satisfy

\begin{equation*}\dfct(\pi') = \dfct(\pi) - \ell(u).
\end{equation*}
To see this, recall that by Observation~\ref{o:pathinversions}, $\pi$ contains $\ell(w)$ pairs $(\pi_i,\pi_j)$ with $|i| < j$ and $\snk(\pi_j) < \snk(\pi_i)$.  By Lemma~\ref{l:minreps}, the condition $w \in \Wejm$ implies that the numbers $\snk(\pi_j)$ and $\snk(\pi_i)$ cannot both belong to the interval $[a_k,b_k]$
(or to the interval $[\ol{b_k},\ol{a_k}]$ if $a_k > 0$). Similarly, by (\ref{eq:Jfactor}), $\pi'$ contains $\ell(wu) = \ell(w) + \ell(u)$
pairs $(\pi'_i, \pi'_j)$ with $|i| < j$ and
\begin{equation}\label{eq:crossed}
    \snk_{F'}(\pi'_j) < \snk_{F'}(\pi'_i).
    \end{equation}
Thus the concatenation $\pi = \pi' \circ \pi''$ extends exactly
$\ell(u)$ such pairs to paths $(\pi_i,\pi_j)$ which cross in $F_{[a_k,b_k]}$ and by Lemma~\ref{l:minreps} satisfy

\begin{equation}\label{eq:uncrossed}
\snk_F(\pi_i) < \snk_F(\pi_j).
\end{equation}
Comparing (\ref{eq:crossed}) and (\ref{eq:uncrossed}), we see
that these crossings are defective and contribute $\ell(u)$ defects to $\pi$ beyond
the defects present in $\pi'$.

We complete the proof of the bijections (\ref{eq:BCbijectionsreversals}) by
demonstrating bijections between all pairs of sets $\Pi_{wv,d}(F)$ and  $\Pi_{wy,d}(F)$ with $v, y \in W_J$. In particular, for fixed $v, y \in W_J$, define the map $\phi_{v,y}: \Pi_{wv}(F) \rightarrow  \Pi_{wy}(F)$ by writing $\pi = \trunc(\pi) \circ \sigma_{z^{-1}}$ for some $z \in W_J$ and mapping
\begin{equation}\label{eq:phivy}
\pi \mapsto \mathrm{trunc}(\pi) \circ \sigma_{z^{-1}v^{-1}y}.
\end{equation}
To see that $\phi_{v,y}$ is well defined, observe that since the type of $\trunc(\pi)$ is $wvz$, the type of $\phi_{v,y}(\pi)$ is $wvz z^{-1}v^{-1}y = wy$.  To see that $\phi_{v,y}$ is invertible, observe that its inverse is $\phi_{y,v}$:
\begin{equation*}
\begin{aligned}
    \phi_{y,v}(\phi_{v,y}(\pi)) &= \phi_{y,v}(\trunc(\pi) \circ \sigma_{z^{-1}v^{-1}y})\\
    &= \trunc(\pi) \circ \sigma_{(z^{-1}v^{-1}y) y^{-1}v} \\
    &= \trunc(\pi) \circ \sigma_{z^{-1}}\\
    &= \pi.
    \end{aligned}
\end{equation*}

Finally, we claim that $\phi_{v,y}$ restricts to a bijection from $\Pi_{wv,d}(F)$ to $\Pi_{wy,d}(F)$ for all $d$. To see this, observe that as $z$ varies over $W_J$, the set of paths of $\trunc(\pi) \circ \sigma_{z^{-1}}$ which meet at the central vertex of $F_{[a_{k-1},b_{k-1}]}$ is constant, regardless of whether they cross there. Thus $\dfctbc(\pi' \circ \sigma_{z^{-1}})$ is also constant, and $\phi_v$ restricts to a bijection from $\Pi_{w,d}(F)$ to $\Pi_{wv,d}(F)$.
  \end{proof}

This leads to our main result: 
the star network $F_{[a_1,b_1]} \circ \cdots \circ F_{[a_k,b_k]}$ graphically represents the product $\wtc{s_{[a_1,b_1]}}q \cdots \wtc{s_{[a_k,b_k]}}q$ in the sense of (\ref{eq:Gtozbnq}).
 
\begin{thm}\label{t:BCwiringdiagramprod}
    Fix a sequence $(s_{[a_1,b_1]}, \dotsc, s_{[a_{k},b_k]})$
  of reversals in $\bn$ and define the type-$\msfBC$ star network

  \begin{equation}\label{eq:mainF}
    F = F_{[a_1,b_1]} \circ \cdots \circ F_{[a_{k}, b_{k}]}.
  \end{equation}
Then we have
\begin{equation}\label{eq:mainklprodk}
    \wtc{s_{[a_1,b_1]}}q \cdots \wtc{s_{[a_k,b_k]}}q = 
    \sum_{y \in \bn} \sum_{d \geq 0} |\Pi_{y,d}(F)| q^d T_y.
\end{equation}
\end{thm}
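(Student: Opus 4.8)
The plan is to induct on the number $k$ of reversal factors, transferring the algebraic recursion supplied by Proposition~\ref{p:klprodcosetexpansion} into the combinatorial recursion supplied by Proposition~\ref{p:BCbijectiontoshorterpathreversals}. For the base case $k = 0$ I would observe that the empty product equals $T_e$, while the empty concatenation is $F_\emptyset$, whose only covering $\msfBC$-path family has type $e$ and no defects; thus the right-hand side of (\ref{eq:mainklprodk}) also reduces to $T_e$, and the two sides agree.

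For the inductive step I would set $F' = F_{[a_1,b_1]} \circ \cdots \circ F_{[a_{k-1},b_{k-1}]}$ and $J = J_{[a_k,b_k]}$, so that $F = F' \circ F_{[a_k,b_k]}$. The inductive hypothesis says $F'$ graphically represents the product of the first $k-1$ factors, so in the notation of Proposition~\ref{p:klprodcosetexpansion} I may take
\[
c_v = \sum_{d' \geq 0} |\Pi_{v,d'}(F')|\, q^{d'} \qquad (v \in \bn).
\]
Feeding this into Proposition~\ref{p:klprodcosetexpansion}, and using $\wtc{s_{[a_k,b_k]}}q = T_{W_J}$ together with (\ref{eq:TuWJ}) and (\ref{eq:cosetsum}) to write $T_w \wtc{s_{[a_k,b_k]}}q = \sum_{v \in W_J} T_{wv}$ for $w \in \Wejm$, I obtain that the coefficient of $T_y$ in the full product — where $y = wv$ is the unique factorization with $w \in \Wejm$ and $v \in W_J$ guaranteed by (\ref{eq:Jfactor}) — equals
\[
a_y = \sum_{u \in W_J} q^{\ell(u)}\, c_{wu}.
\]
I would emphasize that this value depends only on the coset $wW_J = yW_J$, matching the fact that the combinatorial side of (\ref{eq:mainklprodk}) assigns equal coefficients to all $T_v$ with $v$ in a common coset.

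It then remains to identify $a_y$ with $\sum_{d \geq 0} |\Pi_{y,d}(F)|\,q^d$. Substituting the formula for $c_{wu}$ gives $a_y = \sum_{u \in W_J}\sum_{d' \geq 0} |\Pi_{wu,d'}(F')|\, q^{d'+\ell(u)}$, and re-indexing the inner sum by $d = d' + \ell(u)$ rewrites this as $\sum_{d \geq 0} q^d \sum_{u \in W_J} |\Pi_{wu,\,d-\ell(u)}(F')|$. The bijection (\ref{eq:BCbijectionsreversals}) of Proposition~\ref{p:BCbijectiontoshorterpathreversals}, applied with this $w$ and with $v$ chosen so that $y = wv$, shows the inner sum is exactly $|\Pi_{wv,d}(F)| = |\Pi_{y,d}(F)|$. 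Hence $a_y = \sum_{d \geq 0} |\Pi_{y,d}(F)|\,q^d$, which completes the induction and the theorem.

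Since the substantive content is already isolated in the two cited propositions, I expect the main obstacle here to be purely organizational: confirming that $a_y$ is genuinely constant on cosets of $W_J$ (so that distributing the coset sum $T_w\wtc{s_{[a_k,b_k]}}q$ over its individual terms is legitimate), and verifying that the shift $d = d' + \ell(u)$ in the defect grading is exactly the grading shift built into the bijection (\ref{eq:BCbijectionsreversals}), so that the $q$-powers on the two sides align term by term rather than merely summing to the same total.
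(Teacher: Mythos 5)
Your proof is correct and follows essentially the same route as the paper's: induction on $k$, feeding the inductive hypothesis into Proposition~\ref{p:klprodcosetexpansion}, re-indexing the defect grading by $d = d' + \ell(u)$, and then invoking the bijection of Proposition~\ref{p:BCbijectiontoshorterpathreversals} to identify the resulting coefficient with $\sum_{d \geq 0} |\Pi_{y,d}(F)|\,q^d$. The only cosmetic differences are your base case $k=0$ in place of the paper's $k=1$, and your applying the bijection directly at the general element $v \in W_J$ with $y = wv$, where the paper applies it at $v = e$ and then separately uses the coset-constancy $|\Pi_{w,d}(F)| = |\Pi_{wv,d}(F)|$ supplied by the same proposition.
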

\begin{proof}
For any generator subset of the form $J = J_{[a,b]}$, we have by Proposition~\ref{p:wtcreversal} that the coefficient of $T_u$ in $\smash{\wtc{s_{[a,b]}}q}$ is $1$ for $u \in W_J$ and is $0$ otherwise.
On the other hand, the star network $F_{[a,b]}$ satisfies
\begin{equation*}
    |\Pi_{u,d}(F_{[a,b]})| = \begin{cases}
        1 &\text{if $u \in W_J$ and $d=0$},\\
        0 &\text{otherwise}.
    \end{cases}
\end{equation*}
Thus the identity (\ref{eq:mainklprodk}) holds for a single Kazhdan--Lusztig basis element indexed by a reversal.
Suppose therefore that the identity holds for products of $1, \dotsc, k-1$
such elements and consider a product of $k$ of them. Writing $F' = F_{[a_1,b_1]} \circ \cdots \circ F_{[a_{k-1}, b_{k-1}]}$, we have

\begin{equation}\label{eq:mainklprodkminus1}
        \wtc {s_{[a_1,b_1]}}q \cdots \wtc {s_{[a_{k-1},b_{k-1}]}}q = 
    \sum_{v \in \bn} \sum_{d \geq 0} |\Pi_{v,d}(F')| q^d T_v.
    \end{equation}

Defining $J = J_{[a_k,b_k]}$ and applying Proposition~\ref{p:klprodcosetexpansion} to (\ref{eq:mainklprodk}) -- (\ref{eq:mainklprodkminus1}) we have that the left-hand side of (\ref{eq:mainklprodk}) equals

\begin{equation}\label{equ:star}
    \sum_{w \in \Wejm} 
\ntksp \bigg( \sum_{u \in  W_{J}} \sum_{d\geq 0} |\Pi_{wu,d}(F')|  q^{\ell(u)+d}\bigg) T_w \wtc {s_{[a_k,b_k]}}q.
\end{equation}
Since $\Pi_{wu,c}(F')=\emptyset$ when $c<0$, the parenthesized expression in \eqref{equ:star} equals
\begin{equation*}
\begin{aligned}
    \sum_{u\in W_{\ntnsp J}}\sum_{\ d\geq -\ell(u)} \nTksp |\Pi_{wu,d}(F')|q^{\ell(u)+d}
    &=\sum_{u\in W_J}\sum_{d\geq 0} |\Pi_{wu,d-\ell(u)}(F')|q^{d}\\
    &= \sum_{d\geq 0}q^d\sum_{u\in W_J} |\Pi_{wu,d-\ell(u)}(F')|.
\end{aligned}
\end{equation*}
By Proposition~\ref{p:BCbijectiontoshorterpathreversals}, this simplifies to

\begin{equation*}
\sum_{d\geq 0}q^d|\Pi_{w,d}(F)|.
\end{equation*}
Plugging this into \eqref{equ:star} and writing $\wtc {s_{[a_k,b_k]}}q=\sum_{v\in W_J}T_v$, we have 
\begin{equation}\label{equ:double star}
\begin{aligned}
     \sum_{w \in \Wejm} 
%\ntnsp 
\sum_{d\geq 0}q^d|\Pi_{w,d}(F)|T_w \ntksp \sum_{v\in W_J}\ntksp T_v
 = \sum_{w\in\Wejm} \sum_{v\in W_J}\sum_{d\geq 0}q^d|\Pi_{w,d}(F)|T_w T_v.
\end{aligned}
\end{equation}
By Proposition~\ref{p:BCbijectiontoshorterpathreversals}, we have $|\Pi_{w,d}(F)|=|\Pi_{wv,d}(F)|$ for each $v\in W_J$. By \eqref{eq:Jfactor} and \eqref{eq:tvtwtvw},
%\eqref{eq:genlhecke}, 
the final expression in \eqref{equ:double star} is equal to the right-hand side of \eqref{eq:mainklprodk}.
\end{proof}
As a consequence of Theorem~\ref{t:BCwiringdiagramprod}, we can prove a more general statement.  
First let us extend the set $\net\msfBC n$ to include more general planar networks.
Observe that sometimes in a
concatenation $G \circ F$, there exist vertices $x$ in $G$, $y$ in $F$ with $m(x,y) > 1$ 
edges incident upon both. Define the {\em condensed concatenation} $G \bullet F$ to be the subdigraph
of $G \circ F$ obtained by removing, for all such pairs $(x,y)$, all but one of the $m(x,y)$ edges
incident upon both, and by marking this edge with the multiplicity $m(x,y)$.
Let $\bnet{\msfBC}n$ denote
the union of simple star networks in $\net{\msfBC}n$ and all condensed concatenations of these. Let $\bcnet{\msfBC}n$ denote
the union of $\bnet{\msfBC}n$ and the set of ordinary concatenations of these. Call $\bcnet{\msfBC}n$ the set of {\em generalized star networks}, and observe that we have $\net{\msfBC}n \subset \bcnet{\msfBC}n$.

For $F \in \bcnet{\msfBC}n$, we define
the set $\PiBC(F)$ of $\msfBC$-path families coverin $F$ as before, with the additional requirement that
for each edge of multiplicity $k$ in $F$,
a path family $\pi \in \PiBC(F)$ must
contain $k$ paths $\pi_{i_1},\dotsc,\pi_{i_k}$ which include the given edge.
For example, one network in
the set $\bcnet{\msfBC}2$ and one path family
covering it are
\begin{equation}\label{eq:all010bullet}
  F_{[\ol2,2]} \circ F_{[\ol1,1]} \circ F_{[1,2]} \bullet F_{[\ol2,2]} =
\begin{tikzpicture}[scale=.5,baseline=-5]
\node at (-.4,1.5) {$\scriptstyle{2}$};
\node at (-.4,0.5) {$\scriptstyle{1}$};  
\node at (-.4,-0.5) {$\scriptstyle{\ol1}$};
\node at (-.4,-1.5) {$\scriptstyle{\ol2}$};  
\node at (4.4,1.5) {$\scriptstyle{2}$};
\node at (4.4,0.5) {$\scriptstyle{1}$};  
\node at (4.4,-0.5) {$\scriptstyle{\ol1}$};
\node at (4.4,-1.5) {$\scriptstyle{\ol2}$};  
\draw[-, very thick] (0,1.5) -- (1,-1.5) -- (2,-1.5) -- (3,-.5) -- (3.5,0) -- (4,.5);
\draw[-, very thick] (0,-1.5) -- (1,1.5) -- (2,1.5) -- (3,.5) -- (3.5,0) -- (4,-.5);
\draw[-, very thick] (0,.5) -- (1,-.5) -- (2,.5) -- (2.5,1) -- (3.5,0) -- (4,1.5);
\draw[-, very thick] (0,-.5) -- (1,.5) -- (2,-.5) -- (2.5, -1) -- (3.5,0) -- (4,-1.5);
\node at (3.1,.9) {$\scriptstyle{(2)}$};
\node at (3.1,-.9) {$\scriptstyle{(2)}$};
\end{tikzpicture},\qquad
\pi = 
\begin{tikzpicture}[scale=.5,baseline=-5]
\node at (-.4,1.5) {$\scriptstyle{2}$};
\node at (-.4,0.5) {$\scriptstyle{1}$};  
\node at (-.4,-0.5) {$\scriptstyle{\ol1}$};
\node at (-.4,-1.5) {$\scriptstyle{\ol2}$};  
\node at (4.4,1.5) {$\scriptstyle{2}$};
\node at (4.4,0.5) {$\scriptstyle{1}$};  
\node at (4.4,-0.5) {$\scriptstyle{\ol1}$};
\node at (4.4,-1.5) {$\scriptstyle{\ol2}$};  
\draw[-, very thick, red]
(0,1.5) -- (1,-1.5) -- (2,-1.5) -- (3,-.5) -- (3.5,0) -- (4,-.5);
\draw[-, very thick, blue]
(0,.5) -- (1,-.5) -- (2,.5) -- (2.4, .9)
%(3,1.5) 
-- (3.3,0) -- (3.5,0) -- (4,1.5);
\draw[-, very thick, densely dashed, blue]
(0,-.5) -- (1,.5) -- (2,-.5) -- (2.4, -.9)
%(3,-1.5) 
-- (3.3,0) -- (3.5,0) -- (4,-1.5);
\draw[-, very thick, densely dashed, red]
(0,-1.5) -- (1,1.5) -- (2,1.5) -- (3,.5) -- (3.5,0) -- (4,.5);
\end{tikzpicture}.
%    F_{[\ol2,2]} \circ F_{[1,3]} \bullet F_{[\ol2,2]} = 
%\qquad, \qquad
%\pi = \qquad .
\end{equation}
(Compare (\ref{eq:all010bullet}) to (\ref{eq:all010}).) 

In \cite[Thm.\,5.21]{SkanHyperGC} it was proved that for all $v \in \bn$ \avoidingp,
the Kazhdan--Lusztig basis element $\wtc vq$ of $\hbnq$ is graphically represented by a network $F_v$ in $\bnet{\msfBC}n$.
\begin{prop}\label{p:Fv}
    For each element $v \in \bn$ \avoidingp,
    there exists a network 
    \begin{equation}\label{eq:F_v}
    F_v = F_{[a_1,b_1]} \bullet \cdots \bullet F_{[a_p,b_p]}
    \end{equation}
    in $\bnet{\msfBC}n$ satisfying
    \begin{equation*}
        \wtc vq = \sum_{\pi \in \PiBC(F_v)} q^{\dfct^{\msfBC}(\pi)} T_{\type(\pi)} = \sum_{\pi \in \PiBC(F_v)} T_{\type(\pi)}.
    \end{equation*}
    In particular, for all $u \leq v$, the set $\PiBC(F_v)$ contains one path family of type $u$ and this path family is defect-free.  Furthermore,
    at most one interval $[a_i,b_i]$ in (\ref{eq:F_v}) satisfies $a_i = \ol{b_i},$ and if such an interval exists, we may assume that $i = 1$ or $i = p$.
    \end{prop}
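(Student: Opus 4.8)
The plan is to split the assertion into two parts: the two displayed equalities together with the ``in particular'' sentence, which I would obtain almost for free from smoothness, and the structural ``furthermore'' clause, which carries the real content. For the first part I would combine the existence of a representing network with the collapse of the Kazhdan--Lusztig polynomials. Concretely, \cite[Thm.\,5.21]{SkanHyperGC}, recorded just above, provides a network $F_v = F_{[a_1,b_1]} \bullet \cdots \bullet F_{[a_p,b_p]}$ in $\bnet{\msfBC}n$ with $\wtc vq = \sum_{\pi \in \PiBC(F_v)} q^{\dfctbc(\pi)} T_{\type(\pi)}$. On the other hand, since $v$ \avoidsp, the type-$\msfBB$ and type-$\msfC$ Schubert varieties indexed by $v$ are both smooth, so every $P_{u,v}(q)$ with $u \le v$ is identically $1$ and (\ref{eq:wtcdef}) gives $\wtc vq = \sum_{u \le v} T_u$.

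Next I would compare these two expansions in the natural basis. Collecting path families by type, the network expansion reads $\wtc vq = \sum_{u \in \bn} g_u(q)\, T_u$ with $g_u(q) = \sum_{d \ge 0} |\PiBC_{u,d}(F_v)|\, q^d \in \mathbb{N}[q]$. Equating coefficients against $\sum_{u \le v} T_u$ forces $g_u(q) = 1$ for every $u \le v$ and $g_u(q) = 0$ otherwise. Because $g_u$ has nonnegative integer coefficients, the equation $g_u = 1$ can hold only if $|\PiBC_{u,0}(F_v)| = 1$ and $|\PiBC_{u,d}(F_v)| = 0$ for all $d \ge 1$; that is, $\PiBC(F_v)$ has exactly one path family of type $u$ and it is defect-free. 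This is precisely the ``in particular'' statement, and since every path family of $F_v$ is then defect-free, the second displayed equality $\sum_\pi q^{\dfctbc(\pi)} T_{\type(\pi)} = \sum_\pi T_{\type(\pi)}$ follows as well.

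For the ``furthermore'' clause I would first recall, from (\ref{eq:jabdef}), that the reversals with $a_i = \ol{b_i}$ are exactly those whose parabolic $J_{[\ol{b_i},b_i]}$ contains $s_0$: these are the only factors that negate letters, whereas every $F_{[a_i,b_i]}$ with $0 < a_i < b_i$ realizes a sign-free (pure type-$\msfA$) reversal. I would then argue that the hypothesis that $v$ \avoidsp confines the sign-changing part of $v$ tightly enough that a single symmetric reversal $s_{[\ol b, b]}$ suffices in the factorization (\ref{eq:F_v}). Having at most one such factor, I would place it at position $i = 1$ or $i = p$, either by choosing a factorization of this shape from the outset or, when the flanking sign-free reversals have row-supports disjoint from $[\ol b, b]$, by commuting $F_{[\ol b, b]}$ past them, since their simple star networks then do not interact under $\bullet$.

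I expect this last clause to be the main obstacle. Ruling out two separated symmetric reversals is exactly the point at which the signed combinatorics of double-smoothness must be used, and showing that the unique symmetric reversal can be moved to an extreme without disturbing the condensation structure $\bullet$ (or, equivalently, that a factorization of this shape exists) requires care. I would model both steps on the type-$\msfA$ construction of Clearwater and the third author and on the explicit network underlying \cite[Thm.\,5.21]{SkanHyperGC}, tracking defects through Proposition~\ref{p:BCbijectiontoshorterpathreversals} to confirm that the rearranged network still represents $\wtc vq$.
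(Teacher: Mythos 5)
The paper contains no internal proof of this proposition: it is imported wholesale from \cite[Thm.\,5.21]{SkanHyperGC}, including the structural ``furthermore'' clause, so there is no in-paper argument to compare against. Your first two paragraphs are nevertheless a correct, self-contained derivation of the displayed equalities and the ``in particular'' sentence from weaker cited input: taking from \cite[Thm.\,5.21]{SkanHyperGC} only that some network $F_v = F_{[a_1,b_1]} \bullet \cdots \bullet F_{[a_p,b_p]}$ graphically represents $\wtc vq$, the smoothness of the type-$\msfBB$ and type-$\msfC$ Schubert varieties (rational smoothness already suffices, via \cite[Thm.\,A.2]{KLRepCH}) gives $\wtc vq = \sum_{u \leq v} T_u$, and comparing this with the expansion $\sum_{u} g_u(q) T_u$, where $g_u(q) = \sum_{d \geq 0} |\PiBC_{u,d}(F_v)| q^d \in \mathbb N[q]$, forces $|\PiBC_{u,0}(F_v)| = 1$ and $|\PiBC_{u,d}(F_v)| = 0$ for $d \geq 1$ whenever $u \leq v$. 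That coefficient-comparison argument is clean and valid.

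The genuine gap is the ``furthermore'' clause, which you flag as the main obstacle but do not prove: neither the confinement step (at most one factor with $a_i = \ol{b_i}$) nor the normalization step (that such a factor may be taken at $i=1$ or $i=p$) is actually carried out. Moreover, the commutation idea you sketch is doubtful as stated. Adjacent factors in (\ref{eq:F_v}) typically share two or more wires---that overlap is precisely what produces the multiple edges that $\bullet$ condenses---so the hypothesis that the flanking factors have supports disjoint from $[\ol b, b]$ will generally fail, and condensed concatenations cannot be reordered freely, since reordering changes which pairs of star centers share wires and hence changes the resulting multigraph and its edge multiplicities. The structural claim is really a feature of the explicit factorization constructed in \cite[Thm.\,5.21]{SkanHyperGC}, where the factor whose parabolic subgroup contains $s_0$ arises at one extreme of the factorization; at the level of this paper, the only complete route is to quote that construction, which is exactly what the authors do. As written, your proposal establishes the two equalities and the ``in particular'' sentence, but the final sentence of the proposition remains unproved.
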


The symmetry of elementary star networks of type $\msfBC$ guarantees that in the network $F_v$ (\ref{eq:F_v}), any edge labeled with a multiplicity has a reflection labeled with the same multiplicity. Assume that there are $2p = 2p(v)$ such edges,
and let the multiplicities of those edges which are incident upon at least one central vertex of an upper star of an 
elementary star network in (\ref{eq:F_v}) be \begin{equation}\label{eq:multiplicities}
m_{v,1},\dotsc,m_{v,p(v)}.
\end{equation}
These multiplicities and the $q$-analogs of their factorials relate the Kazhdan--Lusztig basis element $\wtc vq$ to the network 

\begin{equation}\label{eq:G_v}
G_v \defeq F_{[a_1,b_1]} \circ \cdots \circ F_{[a_p,b_p]}
\end{equation}
in $\net{\msfBC}n$ defined in terms of $F_v$ (\ref{eq:F_v}).
Recalling the standard definitions
\begin{equation*}
    [n]_q \defeq \begin{cases}
        1 + q + \cdots + q^{n-1} &\text{if $n \geq 1$},\\
        0 &\text{if $n=0$},
        \end{cases}
        \qquad
        [n]_q! \defeq
        \begin{cases}
[n]_q \cdots [1]_q
&\text{if $n \geq 1$},\\
1 &\text{if $n=0$},
        \end{cases}
\end{equation*}
and defining

\begin{equation}\label{eq:rvdef}
    r(v) \defeq [m_{v,1}]_q! \cdots [m_{v,p(v)}]_q!,
\end{equation}
in terms of (\ref{eq:multiplicities}), we have the following~\cite[Thm.\,4.3]{SkanNNDCB}.
\begin{lem}\label{l:rvcv}
For each element $v \in \bn$ \avoidingp,
the type-$\msfBC$ star network $G_v$ satisfies
\begin{equation}\label{eq:rvcv}
        r(v) \wtc vq = 
        \wtc{s_{[a_1,b_1]}}q \cdots \wtc{s_{[a_p,b_p]}}q = \nTksp \sum_{\pi \in \PiBC(G_v)} \nTksp q^{\dfct^{\msfBC}(\pi)} T_{\type(\pi)}.
\end{equation}
\end{lem}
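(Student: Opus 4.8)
The second equality in \eqref{eq:rvcv} is immediate: applying Theorem~\ref{t:BCwiringdiagramprod} to the ordinary concatenation $G_v$ of \eqref{eq:G_v} and regrouping the resulting sum $\sum_{y}\sum_{d}|\PiBC_{y,d}(G_v)|q^{d}T_y$ by path family yields $\wtc{s_{[a_1,b_1]}}q \cdots \wtc{s_{[a_p,b_p]}}q = \sum_{\pi \in \PiBC(G_v)} q^{\dfctbc(\pi)}T_{\type(\pi)}$. The content of the lemma is therefore the first equality, and the plan is to produce the factor $r(v)$ by comparing $G_v$ with the condensed network $F_v$ of \eqref{eq:F_v}. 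These two networks share the same sequence of simple stars, differing only in that each bundle of $m$ parallel edges of $G_v$ is replaced in $F_v$ by a single edge of multiplicity $m$. Collapsing every bundle onto its representative edge defines a map $\rho\colon \PiBC(G_v)\to\PiBC(F_v)$; since collapsing a bundle changes neither the source nor the sink of any path, $\rho$ preserves type, and $\rho$ is visibly surjective (lift by any routing of each bundle).

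By Proposition~\ref{p:Fv} every $\ol{\pi}\in\PiBC(F_v)$ is defect-free and $\wtc vq=\sum_{\ol{\pi}\in\PiBC(F_v)}T_{\type(\ol{\pi})}$, so it suffices to prove that the $q^{\dfctbc}$-generating function of each fiber of $\rho$ is the \emph{constant} $r(v)$, after which
\[
\sum_{\pi \in \PiBC(G_v)} q^{\dfctbc(\pi)}T_{\type(\pi)}=\sum_{\ol{\pi}\in\PiBC(F_v)}\Bigl(\sum_{\pi\in\rho^{-1}(\ol{\pi})}q^{\dfctbc(\pi)}\Bigr)T_{\type(\ol{\pi})}=r(v)\,\wtc vq.
\]
A lift $\pi\in\rho^{-1}(\ol{\pi})$ amounts to a choice, for each bundle of $G_v$, of a routing of the paths through that bundle. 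By the mirror symmetry of type-$\msfBC$ star networks the bundles occur in reflected pairs incident on upper and lower central vertices, and the defining condition $\pi_{\ol{i}}=\text{reflection of }\pi_i$ forces the routing through a lower bundle once its reflected upper bundle is fixed. Hence only the $p(v)$ upper bundles, carrying the multiplicities $m_{v,1},\dotsc,m_{v,p(v)}$ of \eqref{eq:multiplicities}, contribute independent choices, the routings through a bundle of multiplicity $m$ being indexed by $\mfs{m}$. The plan is then to show that the fiber factors as
\[
\sum_{\pi\in\rho^{-1}(\ol{\pi})}q^{\dfctbc(\pi)}=\prod_{i=1}^{p(v)}\Bigl(\sum_{w\in\mfs{m_{v,i}}}q^{\ell(w)}\Bigr)=\prod_{i=1}^{p(v)}[m_{v,i}]_q!=r(v),
\]
using the Poincar\'e identity $\sum_{w\in\mfs{m}}q^{\ell(w)}=[m]_q!$ (the same factorial produced algebraically by Corollary~\ref{c:doug}) and the definition \eqref{eq:rvdef} of $r(v)$.

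The main obstacle is the per-bundle defect bookkeeping establishing the middle equality above. Routing one upper bundle by $w\in\mfs{m}$ introduces $\ell(w)=\inv(w)$ crossings among its $m$ paths; these crossings not only create defects where the paths next meet but also flip the crossing parities governing their later meetings, so the change in $\dfctbc$ is \emph{not} termwise equal to $\inv(w)$. One must instead argue that, summed over the entire routing group, the fiber generating function collapses to $[m]_q!$ independently of $\ol{\pi}$—precisely the local computation carried out in type $\msfA$ in \cite[Thm.\,4.3]{SkanNNDCB}, which I would import bundle by bundle. The genuinely type-$\msfBC$ step is the assembly: the reflection pairing reduces the count to the upper bundles, and the hypothesis in Proposition~\ref{p:Fv} that at most one interval satisfies $a_i=\ol{b_i}$ guarantees that no bundle ever joins two self-reflective central vertices. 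Consequently every routing group is a symmetric group $\mfs{m}$ rather than a type-$\msfBB$ or $\msfC$ Coxeter group, so the fiber contributes only the factorials $[m_{v,i}]_q!$ appearing in $r(v)$ and no type-$\msfBB/\msfC$ Poincar\'e polynomial intrudes.
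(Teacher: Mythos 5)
Your proposal cannot be compared against an in-paper proof, because the paper supplies none: Lemma~\ref{l:rvcv} is imported wholesale with the citation \cite[Thm.\,4.3]{SkanNNDCB}. Judged on its own terms, your architecture is the right one, and it is in fact exactly how the lemma could be made self-contained inside this paper. The second equality of (\ref{eq:rvcv}) is, as you say, immediate from Theorem~\ref{t:BCwiringdiagramprod} applied to $G_v$, with no circularity (that theorem's proof nowhere uses Lemma~\ref{l:rvcv}). The first equality does reduce, via Proposition~\ref{p:Fv}, to showing that every fiber of the collapse map $\rho\colon\PiBC(G_v)\to\PiBC(F_v)$ has $q^{\dfctbc}$-generating function $r(v)$; and that fiber statement is precisely Lemma~\ref{l:collapse} of the paper (proved there independently of Lemma~\ref{l:rvcv}), applied once to each reflected pair of bundles. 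Your genuinely type-$\msfBC$ observations are also correct and mirror the hypotheses of Lemma~\ref{l:collapse}: the reflection condition forces the lower-bundle routings, and the ``at most one interval with $a_i=\ol{b_i}$'' clause of Proposition~\ref{p:Fv} prevents a bundle from joining two self-reflective central vertices, so each routing group is $\mfs{m}$ and only the factors $[m]_q!$, never a type-$\msfBB$/$\msfC$ Poincar\'e polynomial, can arise.

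The genuine gap is that the one step carrying all the content --- the per-fiber identity $\sum_{\pi\in\rho^{-1}(\ol\pi)}q^{\dfctbc(\pi)}=r(v)$ --- is never proved. You defer it to ``the local computation in \cite[Thm.\,4.3]{SkanNNDCB},'' but that is the very reference from which the paper imports the entire lemma, so at the decisive point your proposal restates the citation rather than proving anything. Worse, the obstacle you offer as the reason for deferring (rerouting a bundle ``flips the crossing parities governing later meetings,'' so the defect change ``is not termwise equal to $\inv(w)$'') is exactly what a short argument dispels, and that argument is the missing idea. Take as base point the sorted routing $\pi$, in which the bundle path with $a$-th smallest index uses the $a$-th lowest parallel edge. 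For the rerouting $\tau$ given by a word $u_1\cdots u_m$, any pair of bundle paths whose order on the parallel edges is swapped relative to $\pi$ has its crossings at \emph{both} $x$ and $y$ toggled, while an unswapped pair has neither toggled; hence every pair's crossing count up to any vertex beyond $y$ changes by $0$ or $2$, all later defect parities are untouched, and only the parities governing the meetings at $y$ and $\ol y$ flip. Since no two bundle paths have indices of equal absolute value (a path cannot pass through both a vertex and its distinct reflection, as they lie in the same factor), exactly one member of each pair/reflected-pair satisfies the eligibility condition $|i|\leq j$, so each inversion of $u_1\cdots u_m$ contributes exactly one new defect. The defect change therefore \emph{is} termwise $\inv(u_1\cdots u_m)$, the fiber sum telescopes to $[m]_q!\,q^{\dfctbc(\pi)}$, and independence over the upper bundles gives $r(v)$. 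This is the content of (\ref{eq:taupidefect})--(\ref{eq:mqfactorial}) in the paper's proof of Lemma~\ref{l:collapse}; without it, or an equivalent, your proposal establishes only the easy second equality of (\ref{eq:rvcv}).
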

\noindent In other words, the replacement of $G_v$ by $F_v$ on the right-hand side of (\ref{eq:rvcv}) leads to the equation
\begin{equation}\label{eq:EFidentity}
    \sum_{\pi \in \PiBC(G_v)} \nTksp q^{\dfct^{\msfBC}(\pi)} T_{\type(\pi)} = 
    r(v) \nTksp
    \sum_{\pi \in \PiBC(F_v)} \nTksp q^{\dfct^{\msfBC}(\pi)} T_{\type(\pi)}.
\end{equation}
More generally, for any generalized star networks $G$ and $F$, with $F$ formed by replacing multiple edges in $G$ by a single %multiplicity-$m$ 
edge in $F$, 
path families in $F$ and $G$ are related in the following simple way.

\begin{lem}\label{l:collapse}
Fix a generalized star network $G$ in which $m$ multiplicity-$1$ edges are incident upon a pair $\{x,y\}$ of vertices and the $m$ reflections of these edges are incident upon the reflections $\{\ol x, \ol y\}$ of $x$ and $y$. Assume that at most one of the vertices $\{ x, y \}$ equals its reflection. Replace the edges incident upon $x$ and $y$
by a single multiplicity-$m$ edge,
replace their reflections by another such edge, and call the resulting generalized star network $F$.
Then we have
\begin{equation}\label{eq:mto1}
        \sum_{\pi \in \PiBC(G)} \nTksp q^{\dfct^{\msfBC}(\pi)} T_{\type(\pi)} = 
    [m]_q! \nTksp
    \sum_{\pi \in \PiBC(F)} \nTksp q^{\dfct^{\msfBC}(\pi)} T_{\type(\pi)}.
\end{equation}
\end{lem}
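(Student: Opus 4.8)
The plan is to partition the families covering $G$ according to the collapse of the distinguished bundle of edges, and to evaluate the defect generating function one fiber at a time.

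First I would define a collapse map $\Phi\colon \PiBC(G) \to \PiBC(F)$ sending a family $\pi$ to the family obtained by rerouting each of the $m$ paths that use an edge of the bundle between $x$ and $y$ through the single multiplicity-$m$ edge of $F$ instead (and likewise for the $m$ reflected paths using the reflected bundle). Since this alters neither the source nor the sink of any path, $\Phi$ preserves type, $\type(\Phi(\pi)) = \type(\pi)$; that $\Phi(\pi)$ is again a $\msfBC$-path family meeting the multiplicity-$m$ edge with the required $m$ paths is immediate from the reflection symmetry of $\pi$. Next I would identify the fiber $\Phi^{-1}(\pi')$ for fixed $\pi' \in \PiBC(F)$. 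The $m$ paths of $\pi'$ on the multiplicity-$m$ edge enter the vertex $x$ in a definite vertical order and leave $y$ in a definite vertical order, so a preimage is exactly a choice of how to distribute these $m$ paths among the $m$ parallel edges of $G$, i.e.\ a permutation $\rho \in \mfs m$. The hypothesis that at most one of $x,y$ equals its reflection guarantees that the bundle is disjoint from its reflection, so the routing of the $m$ reflected paths is \emph{forced} by the condition $\pi_{\ol i} = (\text{reflection of } \pi_i)$ rather than chosen independently; hence $\rho \mapsto (\text{family routed by }\rho)$ is a bijection $\mfs m \to \Phi^{-1}(\pi')$ and $|\Phi^{-1}(\pi')| = m!$. (Were both $x$ and $y$ self-reflective, the bundle would be its own reflection and the reflection condition would instead cut the fiber down; this is why that case is excluded.)

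The heart of the argument is to track the defect statistic across a fiber. Writing $\pi_\rho \in \Phi^{-1}(\pi')$ for the family routed by $\rho$, I claim
\begin{equation*}
\dfctbc(\pi_\rho) = \dfctbc(\pi') + \inv(\rho).
\end{equation*}
Every type-$\msfBC$ defect of $\pi_\rho$ that does not record a mutual crossing of two bundle paths occurs at a vertex lying weakly before $x$ or strictly after $y$, where $\pi_\rho$ agrees with $\pi'$; such defects contribute the constant $\dfctbc(\pi')$ across the fiber. The remaining defects are those recorded where two bundle paths reunite, and here the key geometric fact is that the $m$ bundle paths reach the bundle pairwise \emph{uncrossed} modulo $2$ (they carry no mutual defect into $x$): this is what makes the routing $\rho = e$, which is the one realized by $\pi'$, contribute no such defect, and makes each inversion of $\rho$ create exactly one new odd crossing, hence exactly one new defect, at the reunion vertex. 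Establishing this clean baseline—that bundled paths have even pairwise crossing parity upon entering the bundle—is the main obstacle. I would prove it by induction on the number of factors of $G$, organizing the crossing bookkeeping exactly as in the proof of Proposition~\ref{p:BCbijectiontoshorterpathreversals} and using Lemma~\ref{l:minreps} and Observation~\ref{o:pathinversions} to control which pairs of sinks may simultaneously occupy a single reversed block; the special case of a single collapsed concatenation already underlies Lemma~\ref{l:rvcv}, which serves as a consistency check.

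Finally I would assemble the pieces. Summing over a fiber, using type-invariance of $\Phi$ to factor out $T_{\type(\pi')}$ and the classical identity $\sum_{\rho \in \mfs m} q^{\inv(\rho)} = [m]_q!$, gives
\begin{equation*}
\sum_{\pi \in \Phi^{-1}(\pi')} q^{\dfctbc(\pi)} T_{\type(\pi)} = q^{\dfctbc(\pi')} T_{\type(\pi')} \sum_{\rho \in \mfs m} q^{\inv(\rho)} = [m]_q!\, q^{\dfctbc(\pi')} T_{\type(\pi')}.
\end{equation*}
Summing this over all $\pi' \in \PiBC(F)$, the left-hand side regroups into $\sum_{\pi \in \PiBC(G)} q^{\dfctbc(\pi)} T_{\type(\pi)}$ since the fibers of $\Phi$ partition $\PiBC(G)$, while the right-hand side is $[m]_q! \sum_{\pi' \in \PiBC(F)} q^{\dfctbc(\pi')} T_{\type(\pi')}$, which is exactly the identity (\ref{eq:mto1}).
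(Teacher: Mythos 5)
Your overall skeleton---an $m!$-to-one collapse map $\Phi$, fiber-by-fiber evaluation of the defect generating function, and the identity $\sum_{\rho\in\mfs m} q^{\inv(\rho)}=[m]_q!$---is exactly the paper's. The gap is that the statement you single out as the heart of the argument and ``the main obstacle,'' namely that the $m$ bundle paths reach $x$ pairwise uncrossed modulo $2$ (``they carry no mutual defect into $x$''), is false, so the induction you propose to establish it cannot succeed. For a counterexample take
\begin{equation*}
G \;=\; F_{[\ol 2,2]} \circ F_{[1,2]} \circ F_{[1,2]},
\end{equation*}
with the bundle being the $m=2$ parallel edges joining the upper star centers $x$ (second factor) and $y$ (third factor); neither $x$ nor $y$ is its own reflection, so the hypotheses of the lemma hold. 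Any $\msfBC$-path family in which $\pi_1$ and $\pi_2$ cross at the central vertex of $F_{[\ol 2,2]}$ (with $\pi_{\ol 1},\pi_{\ol 2}$ crossing there as well, preserving reflection symmetry) enters $x$ with odd pairwise crossing parity, i.e.\ carrying a defect at $x$. This also breaks your identification of the base point of each fiber: in this example the routing that preserves the order of entry into $x$---your $\rho=e$, ``the one realized by $\pi'$''---makes the pair meet at $y$ after exactly one crossing, hence with a defect at $y$, while the routing that crosses at $x$ is the one with no defect at $y$. Your fiber sum $1+q$ survives, but the exponent is attached to the wrong representative, and it is precisely the base exponent that must equal $\dfctbc(\pi')$ for (\ref{eq:mto1}) to hold.

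What is true, and what the paper's proof uses, is a parity cancellation that makes the order of entry into $x$ irrelevant: for two bundle paths, the parity of their crossings before $x$ records whether their source order agrees with their order of entry into $x$, and whether they cross at $x$ records whether that entry order agrees with the order of their assigned bundle edges; adding mod $2$, the entry order cancels, so the parity when they meet at $y$ records only whether the source order agrees with the bundle-edge order. Accordingly, the canonical element of each fiber must be defined by sorting the bundle-edge assignment by \emph{source index} (the paper's choice: $\pi_{i_1},\dotsc,\pi_{i_m}$ with $i_1<\cdots<i_m$ placed on $\varepsilon_1,\dotsc,\varepsilon_m$ from bottom to top), and the defects at $y$ of an arbitrary fiber element are the inversions of its assignment relative to source order; no induction on factors is needed. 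A second, smaller gap: you locate all bundle-pair defects at the reunion vertex $y$, but two bundle paths can meet again at internal vertices to the right of $y$ and be defective there; showing that such defects are constant across the fiber (and agree with those of $\pi'$) is not automatic from the routes agreeing, since the crossing histories differ. One needs the paper's observation that if two bundle paths occupy different relative orders between $x$ and $y$ in two fiber elements, then they cross at both $x$ and $y$ in one element and at neither in the other, so their crossing counts before any later vertex differ by $0$ or $2$ and the defect status at every such vertex is fiber-independent.
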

\begin{proof}
Assume that $x$ and $y$ lie on or above the horizontal line of symmetry in $G$. Let $\mathcal E = \{\varepsilon_1,\dotsc, \varepsilon_m\}$ be the set of $m$ edges from $x$ to $y$, labeled from bottom to top, and let $\ol{\mathcal E}$ be the set of reflections of these.  

Consider a path family $\pi \in \PiBC(G)$ with the property that
edges $\varepsilon_1,\dotsc,\varepsilon_m$ belong to
paths $\pi_{i_1},\dotsc,\pi_{i_m}$, 
respectively, with $i_1 < \cdots < i_m$.
Define 
$\Phi(\pi,\mathcal E)$ to be the set of all path families in $\PiBC(G)$ which agree with $\pi$ on every edge of $G$, except possibly for those edges belonging to $\mathcal E \cup \ol{\mathcal E}$.
Since each path family $\tau \in \Phi(\pi,\mathcal E)$ 
can differ from $\pi$
only in that 
edges $\epsilon_1,\dotsc,\epsilon_m$ belong to 
paths $\tau_{u_1},\dotsc,\tau_{u_m}$, respectively,
for some permutation $u_1 \cdots u_m$ of $\{i_1,\dotsc,i_m\}$, we have that
$\type(\tau) = \type(\pi)$ and 
$|\Phi(\pi,\mathcal E)| = m!$.

We claim that each path family $\tau \in \Phi(\pi,\mathcal E)$ has defects which agree with those of $\pi$ except possibly at the vertex $y$, where $\pi$ has no defect:
   \begin{equation}\label{eq:taupidefect}
    \dfct^{\msfBC}(\tau) = \dfct^{\msfBC}(\pi) + \# \{ \text{defects of $\tau$ occurring at vertex $y$}\}.
\end{equation}
To see this, observe that our choice of $\pi$ implies that each pair of paths among $\pi_{i_1},\dotsc,\pi_{i_m}$ crosses an even number of times before meeting at $y$.  Also the source-to-$x$ subpaths of $\pi_{i_1},\dotsc,\pi_{i_m}$ and $\tau_{i_1},\dotsc,\tau_{i_m}$ are identical. Now any vertex $z$ to the right of $y$
belongs to
paths $\tau_k$, $\tau_l$ if and only if it belongs to $\pi_k, \pi_l$.
Furthermore, if $(\pi_k, \pi_l)$ and $(\tau_k,\tau_l)$ appear in the same relative order between $x$ and $y$, then the number of crossings of $(\pi_k, \pi_l)$ preceding $z$ equals the number of crossings of $(\tau_k, \tau_l)$ preceding $z$. Otherwise these numbers of crossings differ by $2$, with one pair crossing at $x$ and at $y$ while the other pair crosses at neither of these vertices.
In either case, the meeting of $(\pi_k, \pi_l)$ at $z$ is defective if and only if the meeting of $(\tau_k, \tau_l)$ at $z$ is defective.

Consider therefore the defects occurring at vertex $y$. By our choice of $i_1,\dotsc,i_m$, %Then 
the number of defects of $\tau$ occurring at vertex $y$ equals $\inv(u_1 \cdots u_m)$. Thus by (\ref{eq:taupidefect}) we have 

 \begin{equation}\label{eq:mqfactorial}
     \sum_{\tau \in \Phi(\pi,\mathcal E)} q^{\dfct^{\msfBC}(\tau)} = q^{\dfct^{\msfBC}(\pi)}\nTksp\nTksp
     \sum_{u \in \mfs{\{i_1,\dotsc,i_m\}}} \nTksp \nTksp q^{\inv(u)}
     %q^{\dfct(\pi)} 
     = 
     [m]_q! q^{\dfct^{\msfBC}(\pi)},
 \end{equation}
 and the coefficient of $T_w$ on left-hand-side of (\ref{eq:mto1}) equals
 \begin{equation}\label{eq:nodefectaty}
     %\sum_{w \in \bn} T_w \nTksp 
     \sum_{\tau \in \PiBC_w(G)}\nTksp q^{\dfct^{\msfBC}(\tau)} =
     %\sum_{w \in \bn} T_w
     \nTksp\sumsb{\pi \in \PiBC_w(G)\\
     \text{having no }\\\text{defect at } y}
     %q^{\dfct(\pi)} 
     \sum_{\tau \in \Phi(\pi,\mathcal E)} q^{\dfct^{\msfBC}(\tau)}
     = [m]_q! 
     %\sum_{w \in \bn} T_w \nTksp 
     \nTksp\sumsb{\pi \in \PiBC_w(G)\\ 
     \text{having no }\\\text{defect at } y}\nTksp 
     %[m]_q! 
     q^{\dfct^{\msfBC}(\pi)}.
 \end{equation}
 Now create network $F$ from $G$ by deleting edges $\varepsilon_2,\dotsc,\varepsilon_n$, $\ol{\varepsilon_2},\dotsc,\ol{\varepsilon_n}$, and by assigning multiplicity $m$ to $\varepsilon_1$ and $\ol{\varepsilon_1}$. 
 Similarly, create path family $\pi' \in \PiBC(F)$ from $\pi$ by replacing edges $\varepsilon_2,\dotsc,\varepsilon_m$ in $\pi_{i_2},\dotsc,\pi_{i_m}$
 with $\varepsilon_1$, and edges $\ol{\varepsilon_2},\dotsc,\ol{\varepsilon_m}$ 
 in $\pi_{\ol{i_1}},\dotsc,\pi_{\ol{i_m}}$
 with $\ol{\varepsilon_1}$. Specifically we have $\pi' \in \PiBC_{\type(\pi),\dfct(\pi)}(F)$. Moreover, it is easy to see that for each pair $(w,d) \in \bn \times \mathbb N$, this procedure defines a bijective correspondence between the sets
 $\{ \pi \in \PiBC_{w,d}(G) \,|\, \pi \text{ has no defect at } y \}$ and $\PiBC_{w,d}(F)$. Thus we may rewrite the final sum in (\ref{eq:nodefectaty}) as
\begin{equation*}
    [m]_q! \nTksp\sum_{\pi \in \PiBC_w(F)}\nTksp q^{\dfct^{\msfBC}(\pi)},
\end{equation*}
which is the coefficient of $T_w$ on the right-hand-side of (\ref{eq:mto1}).
\end{proof}

Now we may extend Theorem~\ref{t:BCwiringdiagramprod} to sequences of elements of $\bn$ whose type-$\msfBB$ and type-$\msfC$ Schubert varieties are simultaneously smooth.

\begin{thm}\label{t:BCwiringdiagramprod2}
Fix a sequence $(v^{(1)}, \dotsc, v^{(k)})$ of \pavoiding elements in $\bn$,
and define the generalized star network
\begin{equation}\label{eq:F}
  F = F_{v^{(1)}} \circ \cdots \circ F_{v^{(k)}}.
\end{equation}
Then we have
\begin{equation}\label{eq:cvk}
  \wtc{v^{(1)}}q \cdots \wtc{v^{(k)}}q = \sum_{y \in \bn}\sum_{d \geq 0} |\PiBC_{y,d}(F)|q^d T_y.
\end{equation}
\end{thm}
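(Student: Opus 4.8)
The plan is to prove \eqref{eq:cvk} by routing the computation through an auxiliary network $G$ assembled entirely from simple star networks, where Theorem~\ref{t:BCwiringdiagramprod} applies directly, and then to descend from $G$ to $F$ by repeatedly collapsing bundles of parallel edges via Lemma~\ref{l:collapse}. Since each $v^{(j)}$ \avoidsp, Proposition~\ref{p:Fv} supplies a factorization $F_{v^{(j)}} = F_{[a^{(j)}_1,b^{(j)}_1]} \bullet \cdots \bullet F_{[a^{(j)}_{p_j},b^{(j)}_{p_j}]}$, and Lemma~\ref{l:rvcv} pairs it with the ordinary concatenation $G_{v^{(j)}} \defeq F_{[a^{(j)}_1,b^{(j)}_1]} \circ \cdots \circ F_{[a^{(j)}_{p_j},b^{(j)}_{p_j}]}$ together with the scalar $r(v^{(j)})$ satisfying $r(v^{(j)})\wtc{v^{(j)}}q = \sum_{\pi \in \PiBC(G_{v^{(j)}})} q^{\dfctbc(\pi)} T_{\type(\pi)}$. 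I would then set $G \defeq G_{v^{(1)}} \circ \cdots \circ G_{v^{(k)}}$, so that $G \in \net{\msfBC}n$ is a concatenation of simple star networks and $F$ is recovered from $G$ exactly by condensing, within each block, every bundle of parallel edges into a single multiplicity-labeled edge.

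Next I would evaluate the element of $\hbnq$ graphically represented by $G$ in two different ways. First, since $G$ is the concatenation of the simple star networks of all the reversals $s_{[a^{(j)}_i,b^{(j)}_i]}$ taken in order, Theorem~\ref{t:BCwiringdiagramprod} identifies this element with the product of the corresponding Kazhdan--Lusztig basis elements; regrouping the product block by block and applying Lemma~\ref{l:rvcv} rewrites it as $\bigl(\prod_{j=1}^k r(v^{(j)})\bigr)\, \wtc{v^{(1)}}q \cdots \wtc{v^{(k)}}q$. Second, I would apply Lemma~\ref{l:collapse} once to each bundle of parallel edges (together with its reflection), collapsing $G$ to $F$ in finitely many steps; a bundle of size $m$ contributes a factor $[m]_q!$, and because the upper bundles of each block $F_{v^{(j)}}$ are enumerated by the multiplicities \eqref{eq:multiplicities} defining $r(v^{(j)})$, the total accumulated factor is $\prod_{j=1}^k r(v^{(j)})$. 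Hence the graphical representation of $G$ also equals $\bigl(\prod_{j=1}^k r(v^{(j)})\bigr)\sum_{y \in \bn}\sum_{d \geq 0} |\PiBC_{y,d}(F)| q^d T_y$.

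Equating the two evaluations and cancelling the common factor $\prod_{j=1}^k r(v^{(j)})$ then yields \eqref{eq:cvk}; the cancellation is valid because each $r(v^{(j)})$ is a nonzero product of $q$-factorials, so the factor is a nonzero element of the integral domain $\zqq$, over which $\hbnq$ is a free module. I expect the main obstacle to be the careful bookkeeping in the iterated use of Lemma~\ref{l:collapse}: at each stage I must confirm that the edges being merged are genuinely multiplicity-$1$ and that at most one endpoint of each bundle coincides with its own reflection, so that the hypotheses of the lemma hold. The structural input of Proposition~\ref{p:Fv}---at most one self-reflective interval per block, and that interval placed at an end---is precisely what guarantees this, and I would additionally check that ordinary concatenation of the blocks introduces no new parallel edges at the junctions, so that the bundles of $G$ are exactly those internal to the individual networks $F_{v^{(j)}}$ and the collapse factor matches $\prod_{j=1}^k r(v^{(j)})$ exactly.
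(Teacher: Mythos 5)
Your proposal is correct and follows essentially the same route as the paper's own proof: the paper likewise forms the ordinary star network $E = E_{v^{(1)}} \circ \cdots \circ E_{v^{(k)}}$ (your $G$), identifies $r(v^{(1)}) \cdots r(v^{(k)})\, \wtc{v^{(1)}}q \cdots \wtc{v^{(k)}}q$ with the element graphically represented by $E$ via Theorem~\ref{t:BCwiringdiagramprod} and Lemma~\ref{l:rvcv}, passes from $E$ to $F$ via Lemma~\ref{l:collapse}, and cancels the common scalar factor. Your extra bookkeeping (verifying the hypotheses of Lemma~\ref{l:collapse} at each collapse and justifying the cancellation over the integral domain $\zqq$) only makes explicit what the paper leaves implicit.
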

\begin{proof}
For $i = 1,\dotsc, k$, write 
\begin{equation*}
  E_{v^{(i)}} = F_{[a_{i,1}, b_{i,1}]} \circ \cdots \circ F_{[a_{i,p_i}, b_{i,p_i}]}
\end{equation*}
as in (\ref{eq:G_v})
and define
$r(v^{(i)})$             
as in (\ref{eq:multiplicities}),  (\ref{eq:rvdef}).  Then define the ordinary type-$\msfBC$ star network
\begin{equation}\label{eq:E}
    E = E_{v^{(1)}} \circ \cdots \circ E_{v^{(k)}}. 
\end{equation}
By Theorem~\ref{t:BCwiringdiagramprod} and Lemma~\ref{l:rvcv}, we have that
\begin{equation}\label{eq:rvkcvk}
    r(v^{(1)}) \cdots r(v^{(k)}) 
    \wtc{v^{(1)}}q \cdots \wtc{v^{(k)}}q = 
    \sum_{y \in \bn} \sum_{d \geq 0} |\PiBC_{y,d}(E)|q^d T_y,
\end{equation}
because both sides are equal to
\begin{equation*}
    (\wtc{s_{[a_{1,1},b_{1,1}]}}q \cdots \wtc{s_{[a_{1,p_1},b_{1,p_1}]}}q) 
    \quad \cdots \quad
    (\wtc{s_{[a_{k,1},b_{k,1}]}}q \cdots \wtc{s_{[a_{k,p_k},b_{k,p_k}]}}q).
\end{equation*}
By Lemma~\ref{l:collapse}, the right-hand side of (\ref{eq:rvkcvk}) equals
\begin{equation*}
r(v^{(1)}) \cdots r(v^{(k)})
\sum_{y \in \bn} \sum_{d \geq 0} |\PiBC_{y,d}(F)|q^d T_y,    
\end{equation*}
and we have the desired result.
    \end{proof}

It is possible to show that Proposition~\ref{p:Fv} and Theorem~\ref{t:BCwiringdiagramprod2} are not as strong as possible.  In particular, consider
the elements $3412$, $4231$ (in short one-line notation) of $\bn$, which clearly do not \avoidp.
Since 
reduced expressions for these elements do not contain the generator $s_0$, the %type-$\msfBC$ 
Kazhdan--Lusztig basis elements $\wtc{3412}q$, $\wtc{4231}q$ factor just as their type-$\msfA$ analogs do, %(see \cite[?]{CSkanTNNChar}),
\begin{equation*}
    \wtc{3412}q = \wtc{s_2}q \wtc{s_1}q \wtc{s_3}q \wtc{s_2}q, \qquad
    \wtc{4231}q = \wtc{s_1}q \wtc{s_{[2,4]}}q \wtc{s_1}q.
\end{equation*}
(The factorization of $\wtc{3412}q$ follows from \cite[Thm.\,1]{BWHex}; the factorization of $\wtc{4231}q$ is a straightforward computation, observed in \cite[\S 4]{SkanNNDCB}.)
Now by Theorem~\ref{t:BCwiringdiagramprod} we may define networks
\begin{equation}\label{eq:patternstarnetworks}
F_{3412} \defeq F_{[2,3]} \circ F_{[1,2]} \circ F_{[3,4]} \circ F_{[2,3]},
\qquad
F_{4231} \defeq F_{[1,2]} \circ F_{[2,4]} \circ F_{[1,2]}
\end{equation}
which graphically represent
the Kazhdan--Lusztig basis elements
$\wtc{3412}q$, $\wtc{4231}q$ of $\hbnq$,
\begin{equation*}
    \wtc{3412}q = \nTksp \sum_{\pi \in \PiBC(F_{3412})} \nTksp
    q^{\dfct^{\msfBC}(\pi)}T_{\type(\pi)}, \qquad
    \wtc{4231}q = \nTksp \sum_{\pi \in \PiBC(F_{4231})} \nTksp
    q^{\dfct^{\msfBC}(\pi)}T_{\type(\pi)}. 
\end{equation*}
Moreover, this gives positive combinatorial formulas for the corresponding sets of  Kazhdan--Lusztig polynomials,
\begin{equation*}
    P_{u,3412}(q) = \nTksp \sum_{\pi \in \PiBC_u(F_{3412})}\nTksp q^{\dfct^{\msfBC}(\pi)}, 
    \qquad
    P_{u,4231}(q) = \nTksp \sum_{\pi \in \PiBC_u(F_{4231})}\nTksp q^{\dfct^{\msfBC}(\pi)}.
\end{equation*}
Thus we have the following consequence of Theorem~\ref{t:BCwiringdiagramprod}.

\begin{cor}\label{c:KLinterp}
    For $v \in \bn$ such that $\wtc vq$ is graphically represented by some network $F \in \bcnet{\msfBC} n$, the Kazhdan--Lusztig polynomials $\{ P_{u,v}(q) \,|\, u \leq v \}$ satisfy
    \begin{equation*}
        P_{u,v}(q) = \nTksp \sum_{\pi \in \PiBC_u(F)} \nTksp q^{\dfct^{\msfBC}(\pi)}.
    \end{equation*}
\end{cor}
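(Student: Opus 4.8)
The plan is to read the identity off directly, by comparing two expansions of $\wtc vq$ in the natural basis. By hypothesis $F$ graphically represents $\wtc vq$ in the sense of (\ref{eq:Gtozbnq}), so I would first record that
\[
\wtc vq = \sum_{\pi \in \PiBC(F)} q^{\dfct^{\msfBC}(\pi)} T_{\type(\pi)},
\]
and then group the path families on the right according to their type, rewriting this as
\[
\wtc vq = \sum_{u \in \bn} \Bigl( \sum_{\pi \in \PiBC_u(F)} q^{\dfct^{\msfBC}(\pi)} \Bigr) T_u .
\]

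On the other hand, the defining expansion (\ref{eq:wtcdef}) of the Kazhdan--Lusztig basis element reads $\wtc vq = \sum_{u \leq v} P_{u,v}(q) T_u$. Since $\{ T_u \mid u \in \bn \}$ is a $\zqq$-basis of $\hbnq$, the coefficients in these two expressions must agree term by term; for each $u \leq v$ this yields exactly the claimed formula $P_{u,v}(q) = \sum_{\pi \in \PiBC_u(F)} q^{\dfct^{\msfBC}(\pi)}$.

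The only point that even deserves comment --- and the nearest thing to an obstacle --- is consistency for indices $u \not\leq v$: the graphical side a priori ranges over all types $u \in \bn$, whereas the Kazhdan--Lusztig side is supported on $\{ u \leq v \}$. This requires no separate argument, however. Equality of the two natural-basis expansions, together with linear independence of the $T_u$, forces $\sum_{\pi \in \PiBC_u(F)} q^{\dfct^{\msfBC}(\pi)} = 0$ whenever $u \not\leq v$, so restricting the index set to $u \leq v$ in the statement is automatic. Thus the corollary follows immediately from the hypothesis and the definition (\ref{eq:wtcdef}) of the Kazhdan--Lusztig polynomials; the substantive content has already been supplied by Theorem~\ref{t:BCwiringdiagramprod} and the explicit networks (\ref{eq:patternstarnetworks}), which exhibit representable elements such as $3412$ and $4231$ and so make the corollary non-vacuous.
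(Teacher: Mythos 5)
Your proof is correct and matches the paper's treatment: the paper states this corollary as an immediate consequence of the definition of ``graphically represents'' (\ref{eq:Gtozbnq}) and the defining expansion (\ref{eq:wtcdef}), which is exactly your coefficient-comparison argument via linear independence of the natural basis $\{T_u\}$. Your side remark that the coefficient vanishes for $u \not\leq v$ (equivalently, $\PiBC_u(F) = \emptyset$, since the coefficient is a sum of monomials $q^d$) is a correct and harmless addition.
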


\section{Open problems}\label{s:open}

Corollary~\ref{c:KLinterp} suggests several problems.
\begin{prob}\label{p:Cvfactor}
    Describe the elements $v \in \bn$ not \avoidingp{} 
    for which $\wtc vq$ is graphically represented by
    some network $F \in \bcnet{\msfBC}n$.
    \end{prob}
\noindent For elements $v \in \bn$ having this property, the Kazhdan--Lusztig basis element $\wtc vq$ necessarily factors as a product of other Kazhdan--Lusztig basis elements indexed by reversals, possibly divided by a polynomial in $q$.
\begin{prob}
    For $v$ as in Problem~\ref{p:Cvfactor},
    state an algorithm to construct $F_v$, equivalently, to factor $\wtc vq$.
\end{prob}
\noindent Preliminary results (e.g., \cite{DatSkan}) suggest that such factorizations do not exist for all $v \in \bn$.
\begin{prob}
    Prove that Kahzdan--Lusztig basis elements indexed by some subset of $\bn$ can not be graphically represented by generalized star networks. 
\end{prob}

It would also be interesting to extend the graphical interpretation of defects to Hecke algebras of Coxeter 
groups of other types.
\bp
State and prove a type-$\msfD$ analog of Theorem~\ref{t:BCwiringdiagramprod}.
\ep
\noindent Furthermore, perhaps
there is a nongraphical extension of Deodhar's subexpressions and defects which
applies to Hecke algebras of Coxeter groups of all types.

\bp

Find a suitable extension of the defect statistic that applies to an arbitrary Coxeter group $W$,
so that for all sequences $(J_1,\dotsc, J_k)$ of generator subsets of $W$, and maximal elements
$v^{(1)},\dotsc,v^{(k)}$ of 
$W_{J_1},\dotsc,W_{J_k}$, the coefficients in the expansion
\begin{equation*}
    \wtc{v^{(1)}}q \cdots \wtc{v^{(k)}}q = \sum_{w \in W} a_w T_w
    \end{equation*}
    may be interpreted as

    \begin{equation*}
        a_w = a_{w}(q) = \sum_{\sigma} q^{\dfct(\sigma)},
        \end{equation*}
        where the sum is over sequences
         $\sigma = (\sigma_1,\dotsc,\sigma_k) \in W_{J_1} \times \cdots \times W_{J_k}$ satisfying $\sigma_1 \cdots \sigma_k = w$.
      
\ep

\section{Acknowlegements}

The authors are grateful to Clement Cheneviere, Nate Kolo, Joshua Leonard, Roberto Mantaci, Sihong Pan, Celeste Stefanova, Angela Sun, Edward Sun, Diep Luong, Daniel Soskin, and Ben Spahiu for helpful conversations.

\bibliography{my}

\vspace{2em}

\noindent$^\dag$ \textsc{Department of Mathematics, Lehigh University, Bethlehem, PA, USA}\\ \textit{Email address:} \href{mailto:gah219@lehigh.edu}{gah219@lehigh.edu}, \href{mailto:tjp225@lehigh.edu}{tjp225@lehigh.edu}, \href{mailto:mas906@lehigh.edu}{mas906@lehigh.edu}, \href{mailto:jiw922@lehigh.edu}{jiw922@lehigh.edu}

\end{document}